\newcommand{\Z}{\mathbb{Z}}
\newcommand{\C}{\mathbb{C}}
\newcommand{\cO}{\mathcal{O}}
\newcommand{\cE}{\mathcal{E}}
\newcommand{\cL}{\mathcal{L}}
\newcommand{\bP}{\mathbb{P}}
\newcommand{\bE}{\mathsf{E}}
\newcommand{\bB}{\mathsf{B}}
\newcommand{\bM}{\mathsf{M}}
\newcommand{\bL}{\mathsf{L}}
\newcommand{\bN}{\mathsf{N}}
\newcommand{\bW}{\mathsf{W}}
\newcommand{\bZ}{\mathsf{Z}}
\newcommand{\cF}{\mathcal{F}}
\newcommand{\bA}{\mathsf{A}}
\newcommand{\cM}{\mathcal{M}}
\newcommand{\fm}{\mathfrak{m}}
\newcommand{\fM}{\mathfrak{M}}
\newcommand{\fP}{\mathfrak{P}}
\newcommand{\Lb}{\overline{L}}
\newcommand{\wt}{\widetilde}
\DeclareMathOperator{\slope}{slope}
\DeclareMathOperator{\Coker}{Coker}
\DeclareMathOperator{\Jac}{Jac}
\DeclareMathOperator{\supp}{supp}
\DeclareMathOperator{\Coh}{Coh}
\DeclareMathOperator{\Bl}{Bl}
\DeclareMathOperator{\rk}{rk}
\DeclareMathOperator{\Ker}{Ker}
\DeclareMathOperator{\Pic}{Pic}
\DeclareMathOperator{\Hilb}{\fM}
\DeclareMathOperator{\ParHilb}{\fP}
\DeclareMathOperator{\Mat}{Mat}
\DeclareMathOperator{\Aut}{Aut}
\DeclareMathOperator{\Tor}{Tor}
\DeclareMathOperator{\Ext}{Ext}
\DeclareMathOperator{\Hom}{Hom}
\DeclareMathOperator{\End}{End}
\DeclareMathOperator{\Mod}{Mod}
\DeclareMathOperator{\Bimod}{Bimod}
\DeclareMathOperator{\Tails}{Tails}
\DeclareMathOperator{\Spec}{Spec}
\DeclareMathOperator{\dv}{div}
\DeclareMathOperator{\tr}{tr}
\DeclareMathOperator{\Tr}{Tr}
\DeclareMathOperator{\Res}{Res}
\newtheorem{Proposition}{Proposition}
\newtheorem{Theorem}{Theorem}
\newtheorem{Lemma}{Lemma}
\begin{document}
\title{Noncommutative geometry and Painlev\'e equations}
\author{Andrei Okounkov and Eric Rains}
\date{}
\maketitle

\begin{abstract}
We construct the elliptic Painlev\'e equation and its higher 
dimensional analogs as the action of line bundles on 1-dimensional 
sheaves on noncommutative surfaces. 
\end{abstract}

\section{Introduction}

\subsection{}

The classical Painlev\'e equations are very special 2-dimensional 
dynamical systems; they and their generalizations (including 
discretizations) appear in many applications. Their theory is 
very well developed, in fact, from many different angles, see 
for example \cite{Painleve100} for an introduction. 
Many of these approaches
are very geometric and some can be interpreted in terms of 
noncommutative geometry\footnote{
In particular, in \cite{Arinkin/Borodin}, Arinkin and Borodin gave an 
algebro-geometric interpretation of a degenerate discrete Painlev\'e
 equation. Their dynamics takes place not on the moduli spaces
of sheaves but rather on moduli of discrete analogs of connections. 
Their construction may, in fact, be 
interpreted in terms of ours, as will be shown in \cite{Rains2}}.
A full discussion of the relation
between the two topics in the title is outside of the scope of
the present note. 

Our goals here are very practical. The dynamical systems
we discuss appear in a very simple, yet challenging, problem 
of probability theory and mathematical physics: planar 
dimer (or lattice fermion) with a changing boundary, see 
\cite{Kenyon} for an introduction and \cite{Okounkov1,Okounkov2} for the developments
that lead to the present paper. The link to 
Artin-style noncommutative geometry, which is 
the subject of this paper, turns out to be very useful for 
dynamical and probabilistic applications. 

Our hope is to promote further interaction between the two fields and, 
with that goal in mind, we state most of our results in the minimal 
interesting generality, with only a hint of the bigger picture. 
We also emphasize explicit examples.

\subsection{}\label{s_gen}

In algebraic geometry, there is an abundance of group actions 
of the following kind. Let $S\subset\bP^N$ be a projective algebraic variety
(it will be a surface in what follows, whence the choice of notation)
and let 
$$
\bA = \C[x_0,\dots,x_N]\Big/(\textup{equations of $S$}) 
$$
be its homogeneous coordinate ring. Coherent sheaves $\cM$ on $S$ may 
be described as follows 
\begin{equation}
\Coh S =  \frac{\textup{finitely generated
graded $\bA$-modules $\bM$}}{\textup{those of finite dimension}} \,. 
\label{CohS} 
\end{equation}
They depend on discrete as well as
continuous parameters, so that 
$$
X = \textup{moduli space of $\cM$} 
$$
is a countable union of algebraic varieties. While there is a 
very developed general theory of such moduli spaces, see e.g.\ \cite{HL}, 
one can get a very concrete sense of $X$ by giving generators and 
relations for $\bM$, as we will do below. 

The group $\Pic(S)$ of line bundles $\cL$ on $S$ acts on $X$ by 
\begin{equation}
\cM \mapsto \cL \otimes \cM\,. 
\label{tens}
\end{equation}
If $\cL$ is topologically nontrivial, this permutes connected
components of $X$. A great many \emph{integrable} actions of 
abelian groups can be understood from this perspective, an 
obvious invariant of the dynamics being the cycle in $S$ given 
by the support of $\cM$. 

\subsection{}

Our point of departure in this paper is the observation that $\bA$ 
need not be commutative for the constructions of Section \ref{s_gen}.
In fact, noncommutative projective geometry in the 
sense of M.~Artin \cite{SvdB} is precisely the study of graded algebras with 
a good category \eqref{CohS}.

The key new feature of the noncommutative situation is that for 
tensor products
like \eqref{tens} one needs a \emph{right} $\bA$-module $\bL$ 
and then 
$$
\bL \otimes_{\bA} \bM \in \Mod(\bA')\,, \quad \bA' = \End_\bA(\bL) \,.
$$
If $\bL$ is a deformation of a line bundle then $\bA'$ is closely 
related to $\bA$ but, in general, 
$$
\bA' \not \cong \bA \,.
$$
as can be already seen in very simple examples, see Section 
\ref{nc_examples}. 
As a result, we have
\begin{equation}
X \xrightarrow{\,\, \cL \otimes  \,\,} X'
\label{XX'}
\end{equation}
where $X'$ is the corresponding moduli space for $\bA'$. 

While this sounds very abstract, we will be talking about a very 
concrete special case in which $S$ is a blow-up of another surface $S_0$ 
$$
S = \textup{Blow-up of $S_0$ at $p\in S_0$}
$$
and $\cL$ is the exceptional divisor. In the noncommutative case, 
tensoring with $\cL$ will make the point $p$ move in $S_0$ by 
an amount proportional 
to the strength of noncommutativity, see Section \ref{nc_examples}. 

\subsection{}

Noncommutativity deforms the dynamics in two ways. First, the 
action \eqref{XX'} 
happens on a larger space that parametrizes both the module $\bM$ 
and the algebra $\bA$, with an invariant fibration given by 
forgetting $\bM$. Specifically, we will be talking about sheaves
on blowups of $\bP^2$, where the centers $p_1,\dots,p_n$ 
of the blowup are allowed
to move on a fixed cubic curve $E\subset \bP^2$. There will be 
a $\Z^n$-action on these that covers a $\Z^n$-action on $E^n$ 
by translations. 

Second, the notion of a support of a sheaf is lost in noncommutative
geometry, so noncommutative deformation destroys whatever algebraic
integrability that the action \eqref{tens} may have. It is sometimes
replaced by local analytic integrals (given e.g.\ by monodromy of certain
linear difference equations) but even then the orbits of the dynamics are
typically dense, see also Section \ref{s_inv} below.

\subsection{}

In noncommutative projective geometry, the 3-generator Sklyanin 
algebra, or the elliptic quantum $\bP^2$, occupies a special place. 
In this paper, we focus on this key special case and discuss 
the corresponding dynamics from several points of view, including 
an explicit linear algebra description of it, see Section
\ref{s_concrete}. This explicit description may be reformulated 
as addition on a 
moving Jacobian, generalizing the dynamics of \cite[\S 7]{KMNOY}.

In the first nontrivial case, we find the elliptic difference Painlev\'e
equation of \cite{Sakai}, the one that gives all other Painlev\'e equations
by degenerations and continuous limits.  A particularly detailed discussion
of this example may be found in Section \ref{s_ell_p}.  In particular, we
will see that in this case, our system of isomorphisms between moduli
spaces agrees (for sufficiently general parameters) with the corresponding
system of isomorphisms between rational surfaces considered by Sakai.

In the semiclassical limit, the elliptic quantum $\bP^2$ degenerates to a
Poisson structure on a commutative $\bP^2$, which
\cite{Tyurin,Bottacin,Hurtubise/Markman} induces a Poisson structure on
suitable moduli spaces of sheaves, and the moduli spaces we consider in the
commutative case are particularly simple instances of symplectic leaves in
these Poisson spaces.  In Section \ref{s_poisson}, we show that these
Poisson structures on moduli spaces carry over to the noncommutative setting.

\subsection{}

The principal 
results contained in this paper were obtained in September 2008 
during our stay at the CRM in Montreal. It is a special pleasure to 
thank John Harnad and Jacques Hurtubise for making this possible 
and to acknowledge their fundamental contribution to the subject
which is being deformed here in the noncommutative direction. 

We received valuable feedback, in particular, from D.~Kaledin 
and D.~Kazhdan during the first author's 2009 Zabrodsky at Hebrew
University, as well as from many other people on other occasions. 

AO thanks NSF for financial support under FRG 1159416. EMR was supported by
NSF grants DMS-0833464 and DMS-1001645.

\section{Blowups and Hecke modifications} 

\subsection{}

\subsubsection{}

In this paper, we work with one-dimensional sheaves on noncommutative
projective planes. They closely resemble their commutative ancestors, 
which we briefly review now. 

A coherent sheaf $\cM$ on $\bP^2$ is an object in the category 
\eqref{CohS} for $\bA=\C[x_0,x_1,x_2]$.  
A basic invariant of $\cM$ is its Hilbert 
polynomial 
$$
h_\cM(n) = \dim \bM_n\,, \quad n \gg 0\,.
$$
The dimension of $\cM$ is the degree of this polynomial, so for 
one-dimensional sheaves we have 
$$
h_\cM(n) = d n  + \chi
$$
where $d$ is the degree of the scheme-theoretic support of $\cM$ 
and $\chi$ is the 
Euler characteristic of $\cM$. The ratio $\chi/d$ is called the slope of 
$\cM$. Sheaves with 
$$
\slope(\cM') < \slope(\cM)
$$
for all proper subsheaves $\cM'$ are called \emph{stable}; the moduli 
spaces of stable sheaves are particularly nice. 

\subsubsection{}

We will be content with \emph{birational} group actions, hence it will be 
enough for us to 
consider open dense subsets of the moduli spaces formed by 
sheaves of the form 
$$
\cM = \iota_* L 
$$
where $\iota: C \hookrightarrow S$ is an inclusion of a smooth curve of 
degree $d$ and $L$ is a line bundle on $C$. All such sheaves are stable with 
$$
\chi = \deg L + 1 - g\,. 
$$
Here $g=(d-1)(d-2)/2$ is the genus of $C$. Their moduli space is a 
fibration over the base 
$$
B = \bP^{d(d+3)/2} \setminus \{\textup{singular curves}\}
$$
of nonsingular curves $C$ with the fiber $\Jac_{\deg L}(C)$, the Jacobian 
of line bundles of degree $\deg L$. In particular, this moduli space 
has dimension 
$$
\dim X = d^2 + 1 \,. 
$$

\subsubsection{}

Curves $C$ meeting a point $p\in \bP^2$ form a hyperplane in $B$. 
Incidence to $p$ may be rephrased in terms of the blowup 
$$
\Bl: S \to \bP^2 
$$
with center $p$. Namely, $C$ meets $p$ if and only if 
$$
C = \Bl(\wt{C})
$$
where $\wt{C}\subset S$ is a curve of degree
$$
\big[\wt{C}\big] = d \cdot \big[\textup{line}\big] - 
\big[\cE\big] \quad \in H_2(S,\Z) \,.
$$
Here $\cE = \Bl^{-1}(p)$ is the exceptional divisor of the blowup. 

Line bundles $\wt{L}$ on $\wt{C}$ may be pushed forward to $\bP^2$ 
to give sheaves that surject to the structure sheaf $\cO_p$ of $p$.
If $\wt{\cM}$ is such a line bundle viewed as a sheaf on $S$ then 
the sheaves 
\begin{equation}
  \cM = \Bl_*\wt{\cM}\,, \quad \cM' = \Bl_*\wt{\cM}(-\cE)\,,
\label{BlM}
\end{equation}
fit into an exact sequence of the form 
\begin{equation}
0 \to \cM' \to \cM \to \cO_p \to 0 \label{hecke} \,.
\end{equation}
When two sheaves $\cM$ and $\cM'$ differ by \eqref{hecke} one says
that one is a \emph{Hecke modification} of another. 
Thus Hecke modifications at $p$ correspond to twists by the 
exceptional divisor on the blowup with center $p$. For 
noncommutative algebras, the language of Hecke modifications will be 
more convenient. 

\subsection{}\label{class_poiss}

A fundamental fact that goes back to Mukai and Tyurin is that 
a Poisson structure $\omega^{-1}$ on a surface induces a Poisson 
structure on moduli of sheaves, see for example Chapter 10 of
\cite{HL}. Let 
$(\omega^{-1})$ denote the divisor of the Poisson structure. 
For $\bP^2$ this is a curve $E$ of degree $3$. Fix 
\begin{equation}
p_1,\dots,p_{3d} \in E
\label{p13d}
\end{equation}
that lie on a curve $d$, that is, 
$$
\sum_{i=1}^{3d} [p_i] = \cO_{\bP^2}(d)\big|_{E}  \in \Jac_{3d}(E) \,.
$$
Let 
$$
B' = \bP^{(d-1)(d-2)/2} \setminus \{\textup{singular curves}\} \subset B
$$
parametrize curves $C$ meeting \eqref{p13d}, or equivalently, curves 
$\wt{C}$ 
in 
$$
S=\Bl_{p_1,\dots,p_{3d}}\bP^2 
$$
of degree $d \cdot [\textup{line}] - \sum [\cE_i]$. It is by now a 
classical fact, see \cite{BeauvilleK3}, that the fibration 
\begin{equation}
\xymatrix{
\Pic(C) \ar@{^{(}->}[r]\ar[d]& X' \ar[d] \\
[C] \ar@{^{(}->}[r] & B' \\
}
\label{algint} 
\end{equation}
is Lagrangian and that these are the symplectic leaves of the Poisson 
structure on $X$. Further, the group $\Pic(S)$ acts on \eqref{algint}
preserving the fibers and 
the symplectic form. Here $\Pic(C)$ is a countable 
union of algebraic varieties that parameterize line bundles on $C$ 
of arbitrary degree. 

The noncommutative deformation will perturb this discrete integrable 
system. In particular, the points $p_i$ will have to move on the 
cubic curve $E$. The following model example illustrates this
phenomenon. 

\subsection{}\label{nc_examples}

\subsubsection{}

The effect of noncommutativity may be already seen in the affine
situation. Let $R$ be a noncommutative deformation of $\C[x,y]$
and let us examine the effect of Hecke modifications \eqref{hecke}
on $R$-modules. 

The point modules for $R$ are zero-dimensional modules.
These are annihilated by the two-sided ideal generated by commutators
in $R$, so this ideal has to be nontrivial for point modules to exist. 
We consider
\begin{equation}
R = \C\langle x,y \rangle / (xy-yx=\hbar y) \,.
\label{defR}
\end{equation}
Here is $\hbar$ is a parameter that measures the strength of the 
noncommutativity. Setting $\hbar=0$ one recovers the commutative ring 
$\C[x,y]$ with the Poisson bracket 
\begin{equation}
\{x,y\} = \lim_{\hbar \to 0} \frac{xy-yx}{\hbar} = y  \,.
\label{Poiss_y}
\end{equation}
The line $\{y=0\}$ is formed by $0$-dimensional leaves of this Poisson 
bracket, they correspond to point modules 
$$
\cO_{s} = R/\fm_s, \quad \fm_s=(y,x-s)\,, \quad s\in \C \,, 
$$
for $R$. All of them are annihilated by $y$ which generates the 
commutator ideal of $R$.

Let $M$ be an $R$-module of the form 
$$
M = R/f\,, \quad f=f_0(x)+f_1(x,y) \cdot y \in R\,, \quad f_0 \ne 0 \,, 
$$
The maps $M \to \cO_s$ factor through the map 
$$
M \to M/yM = \C[x]/f_0(x) 
$$
and hence correspond to the roots of $f_0(x)$. So far, this is entirely 
parallel to the commutative case, except there are a lot fewer points ---
those are confined to the divisor of the Poisson bracket
\eqref{Poiss_y}. 

\subsubsection{}

The following simple lemma shows Hecke correspondences move the points 
of intersection with this divisor.

\begin{Lemma}
Let $s_1,s_2,\dots$ be the roots of $f_0(x)$ and let $M'=\fm_s M$ be 
the kernel in the exact sequence 
$$
0 \to M' \to M \to \cO_{s_1} \to 0 \,. 
$$
Then $M'= R/f'$ where the roots of $f'_0(x)$ are $s_1-\hbar,s_2,s_3,\dots$. 
\end{Lemma}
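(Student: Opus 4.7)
The plan is to realize $M'$ as a cyclic right $R$-module $R/f'R$ by exhibiting an explicit generator $\phi \in M'$ and computing its right annihilator. I take the convention $M = R/fR$ (right $R$-action), motivated by the simplest case $f = x - s_1$: there $M' = \bar y R$ is cyclic, and the key identity $yx = (x - \hbar) y$ gives $\bar y \cdot x = (s_1 - \hbar)\bar y$, so $\bar y$ has right annihilator $(x - s_1 + \hbar) R$. This already exhibits the shift $s_1 \to s_1 - \hbar$.

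For general $f = (x - s_1) g_0(x) + f_1(x, y) y$, my candidate generator is $\phi := \overline{(x - s_1) + y} \in M$, which lies in $M'$ because both summands belong to the two-sided ideal $\fm_{s_1}$. (In degenerate cases such as $f = (x - s_1) + y$ one has $\phi = 0$ and I use $\bar y$ instead.) I would verify that $\phi$ generates $M'$ by a dimension count in each $y$-degree, using the basic relation $\overline{f} = 0$ in $M$, i.e.\ $\overline{g_0(x)(x - s_1)} + \overline{f_1(x,y) y} = 0$.

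The main calculation is to produce $f' = (x - s_1 + \hbar) g_0(x) + \tilde f_1(x, y) y \in R$ with $\phi f' = 0$, giving $f' R \subseteq \operatorname{Ann}(\phi)$. The crucial input is the identity $(x - s_1) y = y(x - s_1 + \hbar)$: moving $y$ past $(x - s_1)$ shifts the root by $-\hbar$. The polynomial $\tilde f_1$ is built recursively in $y$-degree: the $y^1$ piece of $\phi f'$ determines the leading part of $\tilde f_1$ (for instance, in the pilot example $f = (x - s_1)(x - s_2)$ with $f_1 = 0$, one finds $\tilde f_1(x) = (x - s_1 + \hbar)(x - s_2 + \hbar)/(s_2 - s_1 + \hbar)$), and higher $y$-degree pieces determine higher-order parts. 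The reverse inclusion $\operatorname{Ann}(\phi) \subseteq f'R$ follows by running the recursion in reverse: for $r \in \operatorname{Ann}(\phi)$, one solves $r = f' s$ order by order in $y$-degree, the solvability at each step being guaranteed precisely by the annihilator conditions on $r$. Hence $M' = \phi R \cong R / f' R$, and by construction $f'_0(x) = (x - s_1 + \hbar) g_0(x)$ has roots $s_1 - \hbar, s_2, s_3, \ldots$, as claimed.

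The main obstacle is checking that the recursion terminates in a polynomial $\tilde f_1$ rather than a formal series. This holds as long as the resonances $s_j - s_1 \in \hbar \Z_{>0}$ are avoided; at such values the denominators appearing in $\tilde f_1$ vanish and the generator or the form of $f'$ must be modified. The noncommutative identity $(x - s_1) y = y(x - s_1 + \hbar)$ is the single structural ingredient that drives the whole calculation and produces the shift $s_1 \to s_1 - \hbar$ in the roots.
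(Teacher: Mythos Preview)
The paper does not actually prove this Lemma: it is stated as a motivating example, followed only by the remark that ``a more general statement will be shown in Proposition~1'' (the latter being a $\Tor$/Euler-characteristic computation in the quantum~$\bP^2$ setting). So your direct computational approach is not the paper's approach --- the paper gives none --- and you are doing more work here than the paper does. Your identification of the relation $(x-s_1)y=y(x-s_1+\hbar)$ as the sole mechanism producing the shift is exactly the point of the example, and your choice of right modules is the convention under which the stated sign of the shift comes out correctly.

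The resonance issue you flag is not merely an obstacle to your method but a genuine hidden hypothesis of the Lemma. For instance, with $f=(x-s_1)(x-s_2)$ and $s_2=s_1-\hbar$, one finds $M'\cong R/(x-s_2)R\oplus R/(x-s_2)R$; since then $M'/M'y\cong \C[x]/(x-s_2)\oplus\C[x]/(x-s_2)$ is not cyclic over $\C[x]$, the module $M'$ is not cyclic over $R$ and hence is not of the form $R/f'$ at all. Given that genericity is unavoidable, there is a cleaner route than your recursion for $\tilde f_1$: bypass the explicit $f'$ and instead determine the set of $s$ with $\Hom_R(M',\cO_s)\ne 0$ from the long exact sequence obtained by applying $\Hom_R(-,\cO_s)$ to $0\to M'\to M\to\cO_{s_1}\to 0$. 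A short Koszul computation gives $\Ext^i_R(\cO_{s_1},\cO_s)=0$ for $s\notin\{s_1,s_1-\hbar\}$, which immediately accounts for the unshifted roots $s_2,s_3,\dots$; the two remaining values of $s$ are then handled directly. This makes the genericity hypotheses transparent and avoids any termination question.
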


\noindent
In particular, the iteration of Hecke correspondences give a chain
of submodules of the form 
$$
M \supset \fm_s M \supset \fm_{s-\hbar} \fm_s M \supset 
\fm_{s-2 \hbar} \fm_{s-\hbar} \fm_s M \supset \dots 
$$ 
A more general statement will be shown in Proposition \ref{p1}.

\subsubsection{}

For a noncommutative analog of the correspondence between 
Hecke modifications \eqref{hecke} and twists on the blowup \eqref{BlM}
we need to retrace geometric constructions in module-theoretic 
terms. 

Let $S$ be the blowup of an affine surface $S_0 = \Spec R$ 
with center in an ideal 
$I\subset R$. Sheaves on $S$ correspond to the quotient 
category \eqref{CohS} for
\begin{equation}
\bA = \bigoplus_{n=0}^\infty \bA_n\,, \quad \bA_0 = R \,,
\label{bA}
\end{equation}
and $\bA_n = I^n \subset R$. This quotient category 
$$
\Coh S = \Tails \bA 
$$
is informally known as the category of tails, the morphisms in it are 
$$
\Hom_{\textup{tails}} (\bM,\bM') = \underrightarrow{\lim} 
\Hom_{\textup{graded $\bA$-modules}} (\bM_{\ge k},\bM')
$$
where $\bM_{\ge k} \subset \bM$ is the submodule of elements of 
degree $k$ and higher. The push-forward $\Bl_*$ of sheaves is the functor 
\begin{equation}
\bM \mapsto \Bl_*\bM = \Hom_{\textup{tails}} (\bA,\bM) \in \Mod R \,. 
\label{Bl*}
\end{equation}
In the opposite direction, we have the pullback 
$
\Bl^* M = \bA \otimes_{R} M 
$
of modules as well as their proper transform 
$$
\Bl^{-1} M = \bigoplus I^n M  \in \Coh S \,. 
$$

\subsubsection{}

Now for a noncommutative ring $R$ as in \eqref{defR}, we look for a 
graded module $\bM$ over a graded algebra $\bA$ such that 
\begin{equation}
\Bl_*(\bM(n)) = \fm_{s-(n-1)\hbar} \cdots \fm_{s-\hbar} \fm_s M
\label{twistnc}
\end{equation}
where $\bM(n)_k = \bM_{n+k}$ is the shift of the grading and 
the pushforward is defined as in \eqref{Bl*}. Here $r\in R$ acts 
on $\phi \in \Hom_{\textup{tails}} (\bA,\bM)$ by 
$$
\left[ r \cdot \phi \right] (a)  = \phi(ar) \,. 
$$
The algebra $\bA$, known as Van den Bergh's noncommutative blowup 
\cite{ArtinSome}, 
is constructed as follows 
$$
\bA = \bigoplus_{n \ge 0} \big(T \fm_s\big)^n 
$$
where $T$ is a new generator subject to 
$$
T^{-1} \, r \, T  = y \, r \, y^{-1} \,, \quad \forall r \in R \,, 
$$
which means that 
$$
x T = T (x-\hbar) \,, \quad y T = T y \,, 
$$
and hence 
$$
\big(T \fm_s\big)^n  = T^n \, \fm_{s-(n-1)\hbar} \cdots \fm_{s-\hbar} \fm_s \,.
$$
It is easy to see that the $\bA$-module 
$$
\bM = \Bl^{-1} M = \bigoplus  T^n \,  \fm_{s-(n-1)\hbar} \cdots \fm_{s-\hbar} \fm_s  M 
$$
satisfies \eqref{twistnc} provided $M$ has no $0$-dimensional submodules
supported on $s,s-\hbar,\dots$. 

\subsubsection{}

To relate Hecke modifications to tensor products, we note 
that
$$
\Bl^{-1}_{s-\hbar}
\left(\fm_s M \right) = 
\bL \otimes_{\bA_{s}}
\left(\Bl^{-1}_s M \right)  
$$
where 
$$
\bL = T^{-1} \, \bA_{s}(1)  \in \Bimod(\bA_{s-\hbar},\bA_{s}) \,. 
$$
Here we indicated the centers of the blowup by subscripts $s$ and 
$s-\hbar$, respectively. 

The functor $\bL \otimes_{\bA_{s}}$ is the noncommutative version 
of $\cO_{S}(-\cE) \otimes$ and we see that it moves the 
center of the blowup by minus (to match the minus in 
$\cO_{S}(-\cE)$) the noncommutativity parameter $\hbar$.

\newpage

\section{Sheaves on quantum planes}

\subsection{}

One of the most interesting noncommutative surfaces is associated 
to the 3-dimensional \emph{Sklyanin algebra} $\bA$, which is 
a graded algebra, generated over $\bA_0 = \C$ by 
three generators $x_1,x_2,x_3\in \bA_1$ subject to three 
quadratic relations. 

The relations in $\bA$ may be written in the 
superpotential form 
$$
\frac{\partial}{\partial x_i} \, \mathcal{W} = 0 
$$
where 
$$
\mathcal{W} = a \,x_1 x_2 x_3 + b \,x_3 x_2 x_1 + \frac{c}{3} \sum x_i^3  
$$
and the derivative is applied cyclically, that is, 
$$
\frac{\partial}{\partial x_1} x_{i_0} \cdots x_{i_{p-1}} = 
\sum_{k=0}^{p-1} \delta_{1,i_{k}} x_{i_{k+1}} \cdots x_{i_{p-1+k}} \,,
$$
where the subsubscripts are taken modulo $p$. The parameters 
$a,b,c$ will be assumed generic in what follows. 

\subsection{}

The structure of $\bA$ is much studied, see for example \cite{SvdB}.  In particular, it is a Noetherian 
domain and 
$$
\sum_n \dim \bA_n \, t^n = (1-t)^{-3} \,. 
$$
By definition, the category $\Tails(\bA)$ is the category of coherent 
sheaves on a quantum $\bP^2$. The Grothendieck group of this category
is the same as the $K$-theory of $\bP^2$, that is 
$$
K\left(\Tails(\bA)\right) = \Z^3\,,
$$
corresponding to the three coefficients in the Hilbert polynomial. 
In particular, for $1$-dimensional sheaves, we have 
\begin{equation}
\dim \bM_n  = n \, \deg \bM   + \chi(\bM)  \,, \quad n\gg 0 \,. 
\label{Hilb_f}
\end{equation}

\subsection{} 
Modules $\bM$ such that $\dim \bM_n = 1$, $n\gg 1$, are called 
\emph{point modules} and play a very special role. Choosing a nonzero $v_n
\in \bM_n$ we get 
a sequence of points 
$$
p_n = \left(p_{1,n} : p_{2,n} : p_{3,n} \right) \in \bP^2 = 
\bP\left(\bA_1\right)^*
$$
such that 
$$
x_i \, v_n = p_{i,n} \, v_{n+1} \,. 
$$
The relations in $\bA$ then imply that the locus 
$$
\left\{(p_n,p_{n+1}) \right\}\in \bP^2 \times \bP^2 
$$
is a graph of an automorphism $p_{n+1}=\tau(p_n)$ of a plane cubic curve $E\subset\bP^2$, see \cite{ATV1}. The assignment 
\begin{equation}
\bM \mapsto p_0 \in E 
\label{Mtop}
\end{equation}
identifies $E$ 
with the moduli space of point modules $\bM$ and $\tau$ with the automorphism \footnote{
  Note that if $\tau'$ is any other automorphism of $E$ such that
  $\tau^{\prime 3}=\tau^3$, then the Sklyanin algebra associated to
  the pair $(E,\tau')$ has an equivalent category of coherent sheaves.
  Indeed, one has a natural isomorphism
  \[
  \bigoplus_n \bA_{3n}\cong \bigoplus_n \bA'_{3n},
  \]
  though this does not extend to an isomorphism $\bA\cong \bA'$.
  (Here 3 is the degree of the anticanonical bundle on $\bP^2$.)  This
  is why all key formulas below depend only on
  $\tau^3$.
}
$$
\tau: \bM \mapsto \bM(1) \,,
$$
of the shift of grading  $\bM(1)_{n} = \bM_{n+1}$. The inverse to 
\eqref{Mtop} is given by 
$$
p \mapsto \bA \big/ \bA \, p^\perp \,, 
$$
where $p^\perp \subset \bA_1$ is the kernel of 
$p \in \bP\left(\bA_1\right)^*$.

\subsection{}  

The action of $\bA$ on point modules factors through
the surjection in 
\begin{equation}
0 \to (\bE) \to \bA \to \bB \to 0 \,, 
\label{hom_to_B}
\end{equation}
where $\bE\in\bA_3$ is a distinguished normal
(in fact, central) element and $\bB$ is the 
twisted homogeneous coordinate ring of $E$. By definition, 
$$
\bB = B(E,\cO(1),\tau) = \bigoplus_{n\ge 0} H^0\left(E,\cL_0  \otimes \cdots 
\otimes \cL_{n-1} \right) \,,
$$
where $
\cL_k = \left(\tau^{-k}\right)^* \cO(1) \,. 
$
The multiplication 
$$
\textup{mult}_\tau : \bB_{n} \otimes \bB_{m} \to \bB_{n+m}
$$
is the usual multiplication precomposed with $\tau^{-m} \otimes 1$. 
See for 
example \cite{SvdB} for a general discussion of such algebras. 

The map 
$$
\Coh(E) \owns \cF \mapsto 
\Gamma(F) =\bigoplus_{n} H^0\left(E,\cF \otimes \cL_0  \otimes \cdots 
\otimes \cL_{n-1} \right)
$$
induces an equivalence between the category of coherent 
sheaves on $E$ and finitely generated graded $\bB$-modules
up-to torsion, see Theorem 2.1.5 in \cite{SvdB}. Note in 
particular that
\begin{equation}
  \label{GammaB}
  \bB(k) \cong 
\begin{cases}
\Gamma\left(\cL_{-k} \otimes \dots \otimes \cL_{-1}
\right) &k\ge 0\\
\Gamma\left(\cL_0\otimes\cdots \cL_{-k-1}\right)
&k<0.
\end{cases}
\end{equation}

\subsection{}

It is shown in \cite{ATV2}, Theorem 7.3, that the 
algebra $\bA\left[\bE^{-1}\right]_0$ is simple. If $\bM$ is
a $0$-dimensional $\bA$-module then $\bM\left[\bE^{-1}\right]_0$ 
is a finite-dimensional $\bA\left[\bE^{-1}\right]_0$-module, 
hence zero. It follows that 
any $0$-dimensional $\bA$-module has a filtration with point 
quotients.

\subsection{}\label{s_present}

Moduli spaces of stable $\bM \in \Tails \bA$ may be constructed
using the standard tools of geometric invariant theory, as in e.g.\
\cite{Nevins/Stafford}, or using the existence of an exceptional collection 
$$
 \bA,\bA(1),\bA(2) \in \Tails \bA\,,
$$
as in \cite{NvdB}. In any event, at least for generic parameters of $\bA$, 
the moduli space $\fM(d,\chi)$ of one-dimensional sheaves of degree
$d$ and Euler characteristic $\chi$ is irreducible of dimension
$$
\dim \fM(d,\chi) = d^2 + 1 \,.
$$
It is enough to 
see this in the commutative case, where a generic $\bM$ 
has a presentation of the form 
\begin{equation}
0 \to \bA(-2)^{d-\chi} \xrightarrow{\,\,L\,\,}
 \bA(-1)^{d-2\chi} \oplus \bA^\chi \to \bM \to 0 \,,
\label{resol}
\end{equation}
assuming 
$$
0 \le \chi \le \frac12 d \,, 
$$
see in particular \cite{BeauDet}. When $\frac12 d < \chi \le d$, 
the $\bA(-1)$ term moves from the generators to syzygies. 
The values of $\chi$ outside $[0,d]$ are obtained by a shift 
of grading. 

It follows that \eqref{resol} also gives a presentation of 
a generic stable one-dimensional $\bM$ for Sklyanin algebras. 

\subsection{} 

The letter $L$ is chosen in \eqref{resol} to connect with the 
so-called $L$-operators in theory of integrable systems. In \eqref{resol}, 
$L$ is a just a matrix with linear and quadratic entries in the 
generators $x_1,x_2,x_3 \in \bA_1$. The space of possible $L$'s, therefore, 
is just a linear space that needs to be divided by the action of 
\begin{multline*}
  \Aut \textup{Source}(L) \times \Aut \textup{Target}(L) \cong \\
  GL(d-\chi) \times GL(d-2\chi) \times GL(\chi) \ltimes \C^{\chi(d-2\chi)} \,.
\end{multline*}
In particular, we have a birational map 
\begin{equation}
\xymatrix{
\Mat(d\times d)^3/GL(d)\times GL(d)
\quad\ar@{-->}[r]& \quad 
\fM(d,0) \,,
}
\end{equation}
which is literally unchanged from the commutative situation.

\section{Weyl group action on parabolic sheaves}

\subsection{}

Our goal in this section is to examine the action of 
Hecke correspondences \eqref{hecke} on $1$-dimensional sheaves $\bM$
on quantum planes. 
For this, the language of parabolic sheaves will be convenient.

In what follows we assume $\bM\in \Tails \bA$ is stable $1$-dimensional without
$\bE$-torsion. This means the sequence 
\begin{equation}
0 \to \bM(-3) \xrightarrow{\,\,\bE\,\,} \bM \to \bM/\bE \bM \to 0 
\label{resMEM}
\end{equation}
is exact and comparing the Hilbert polynomials we see 
$\bM/\bE \bM$ has a filtration with $3\deg \bM$ point
quotients. A choice of such filtration 
$$
\bM=\bM_0 \supset \bM_1 \supset \bM_2 \supset \dots \supset 
\bM_{3 \deg \bM} = \bE \bM \cong \bM(-3)
$$
is called a \emph{parabolic} structure on $\bM$. 

\subsection{} 

Moduli spaces
$\fP(d,\chi)$ of parabolic sheaves 
may be constructed as in the commutative
situation. The forgetful map 
\begin{equation*}
\xymatrix{
\fP(d,\chi) 
\quad\ar@{-->}[r]& \quad 
\fM(d,\chi) \,.
}
\end{equation*}
is generically finite of degree $(3\deg\bM)!$ corresponding to the 
generic module $\bM/\bE \bM$ being a direct sum of nonisomorphic 
point modules. 

Since nonempty GIT quotients corresponding to different 
choices of stability parameters are canonically birational, we need
not be specific about fixing the stability conditions for parabolic 
sheaves.

\subsection{}\label{s_def_part} 
Given a parabolic module $\bM$, we denote by 
$$
\partial \bM =  \left(\bM_0\big/\bM_1,\dots,\bM_{3d-1}\big/\bM_{3d} \right)
\in E^{3d}  
$$
the isomorphism class of its point factors. 

In the commutative case, the sum of $\partial\bM$ in $\Pic(E)$ equals
$\cO(\deg M)$. The analogous noncommutative statement reads

\begin{Proposition}\label{p1}
Let $\bM$ be $1$-dimensional and have no $\bE$-torsion. Then 
\begin{equation}
\sum_{p\in\partial\bM} p = \cO(\deg(\bM))+ 3(\chi(\bM)-\deg \bM) \, 
\tau 
\,\, \in \Pic_{3d}(E) \,. 
\label{sum_p}
\end{equation}
\end{Proposition}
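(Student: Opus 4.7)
The plan is to translate everything into coherent sheaves on $E$ via the Serre equivalence for $\bB$ and to compute the class of $\bM/\bE\bM$ in $\Pic_{3d}(E)$ in two ways: once as $\sum_{p \in \partial \bM} p$ and once via a resolution of $\bM$. The $\bE$-torsion-free hypothesis makes \eqref{resMEM} exact, and comparing Hilbert polynomials shows $\bM/\bE\bM$ has constant graded dimension $3d$ for $n \gg 0$. I will interpret it via Serre's equivalence $\Tails(\bB) \cong \Coh(E)$ as a zero-dimensional coherent sheaf $\cF$ on $E$ of length $3d$, carrying a divisor class $[\cF] \in \Pic_{3d}(E)$. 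The parabolic filtration of $\bM$ induces a filtration of $\cF$ by skyscraper sheaves, and the next step is to verify that the support on $E$ of the skyscraper corresponding to $\bM_i/\bM_{i+1}$ coincides with the point assigned to it via \eqref{Mtop}. This is a direct unwinding of definitions: for a skyscraper $\cO_q$, a generator $v$ of $\Gamma(\cO_q)_0 = \C$ satisfies $x \cdot v = x(q) v$ for $x \in \bB_1 = H^0(\cL_0)$, matching the point-module formula $x_i v_0 = p_i v_1$ with $p = q$. Hence $\sum_{p\in\partial\bM} p = [\cF]$ in $\Pic_{3d}(E)$, and it suffices to compute $[\cF]$.

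To this end, I will apply $-\otimes_\bA \bB$ to the resolution \eqref{resol}. The $\bE$-torsion-free hypothesis gives $\Tor_1^\bA(\bM,\bB) = 0$, so for $0 \le \chi \le d/2$ one obtains
\begin{equation*}
  0 \to \bB(-2)^{d-\chi} \to \bB(-1)^{d-2\chi} \oplus \bB^\chi \to \bM/\bE\bM \to 0,
\end{equation*}
with a symmetric sequence (having $\bA(-1)$ among the syzygies) for $d/2 < \chi \le d$; values of $\chi$ outside $[0,d]$ reduce by grading shift. Under Serre's equivalence the terms become line bundles on $E$ whose classes are computed from \eqref{GammaB}, and additivity of the determinant on this exact sequence of sheaves on a curve yields $[\cF]$ as an alternating sum of their classes in $\Pic(E)$.

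The main obstacle is the careful bookkeeping of $\tau$-twists in this last step. Using $\cL_k = (\tau^{-k})^* \cO(1)|_E$, which in additive notation on $\Pic(E)$ equals $\cO(1)|_E + 3k\tau$ (since translation by $-k\tau$ shifts a divisor of degree $3$ by $+3k\tau$), I expect the combination of line bundle classes coming from the resolution to telescope to $\cO(d)|_E + 3(\chi - d)\tau$, matching the right-hand side of \eqref{sum_p} and completing the proof.
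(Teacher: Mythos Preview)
Your strategy is essentially the paper's: identify $\sum_{p\in\partial\bM} p$ with $c_1$ of the sheaf on $E$ corresponding to $\bM/\bE\bM$, and compute that $c_1$ by tensoring a free resolution of $\bM$ with $\bB$. The first paragraph is fine and matches what the paper does implicitly.

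There is one genuine gap in the second paragraph. You invoke the resolution \eqref{resol}, but the paper only asserts this shape for a \emph{generic} stable $\bM$; the proposition, however, is stated for an arbitrary $1$-dimensional $\bE$-torsion-free sheaf. As written, your argument does not apply to non-generic $\bM$. The fix is easy and is exactly what the paper does: observe that since $\Tor^{>0}_\bA(\bB,\bM)=0$, the class $[\bM/\bE\bM]\in K(E)$ equals the Euler characteristic $[\Tor(\bB,\bM)]$, which is additive in $[\bM]\in K(\Tails\bA)$ and hence depends only on $(\rk,\deg,\chi)$. One may then either (a) check the formula on the generators $\bA(k)$ using \eqref{GammaB}, as the paper does, or (b) keep your computation with \eqref{resol} but now justified as a computation of a $K$-theoretic invariant via any $\bM$ with the given $(d,\chi)$. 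Route (a) has the further advantage of eliminating your case split on the range of $\chi$.
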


Here we identify the automorphism 
$\tau$ with the element $\tau(p)-p \in \Pic_0(E)$. 
This does not depend on the choice of $p\in E$. 

\begin{proof} Let 
$$
\dots \to F_1 \to F_0 \to \bM \to 0
$$
be a graded free resolution of a module $\bM$. The cohomology groups 
of 
$$
\dots \to \bB \otimes F_1 \to \bB \otimes F_0 \to 0 
$$
are, by definition, the groups $\Tor^i(\bB,\bM)$. The class of 
the Euler characteristic
$$
\left[\Tor(\bB,\bM)\right] = \sum (-1)^i \left[\Tor^i(\bB,\bM)\right]
\in K(\bB) \cong K(E) 
$$
may be computed using only the $K$-theory class of $\bM$. In fact, 
$$
c_1 \left(\left[\Tor(\bB,\bM)\right]\right) = 
\cO(\deg \bM) + 3(\chi (\bM)-\deg \bM -  \rk \bM) \, \tau \,. 
$$
It is enough to check this for $\bM = \bA(k)$, which follows from 
\eqref{GammaB}.

Alternatively, the groups $\Tor^i(\bB,\bM)$ may be computed from 
a free resolution of $\bB$. From \eqref{resMEM}, we find 
$$
\Tor^0(\bB,\bM) = \bM/\bE \bM\,,
$$
while all  higher ones vanish. The proposition follows. 
\end{proof}

\subsection{}

Let 
$$
\bW = S(3d) \ltimes \Z^{3d}
$$
be the extended affine Weyl group of $GL(3d)$. Weyl group actions 
on moduli of parabolic objects is a classic of geometric representation 
theory. In our context, the lattice subgroup may 
be interpreted as
$$
\Z^{3d} \cong \Pic \Bl_{p_1,\dots,p_{3d}} \bP^2 \Big/ \Pic \bP^2 \,,
$$
while $S(3d)$ acts on it by monodromy as the centers of the blowup 
move around. 

The group $\bW$ is generated by reflections 
$
s_0,\dots,s_{3d-1}
$
in the hyperplanes 
$$
\{a_{3d}=a_1-1\},\{a_1=a_2\},\dots,\{a_{3d-1}=a_{3d} \} \,, 
$$
together with the transformation 
$$
g \cdot (a_1,\dots,a_{3d}) \mapsto (a_2,\dots,a_{3d},a_{1}-1)\,. 
$$
The involutions $s_i$ satisfy the Coxeter relations
$$
(s_i s_{i+1})^3 = 1 
$$
of the affine Weyl group of $GL(3d)$ while $g$ 
acts on them as the Dynkin diagram automorphism 
$$
g \, s_i \, g^{-1} = s_{i-1}. 
$$
Here and above the indices are taken modulo $3d$.

\subsection{}

On the open locus where
$$
\bM/\bE \bM = \bigoplus_{i=1}^{3d} \cO_{p_i} 
$$
and all $p_i$'s are distinct, the symmetric group $S(3d)$ acts
on parabolic structures by permuting the factors. 

This extends
to a birational action of $S(3d)$ on $\fP(d,\chi)$. The closure of the 
graph of $s_k$ may be described as the nondiagonal component 
of the correspondence 
$$
\left\{(\bM,\bM')\, \big| \, \bM_i = \bM'_i, i \ne k \right\} \subset \fP \times
\fP \,. 
$$

\subsection{}
We define 
$$
g \cdot \bM = \bM_1
$$
with the parabolic structure 
$$
\bM_1 \supset \dots \supset \bM_{3d} \supset \bE \bM_1  \,, 
$$
where, as before, we assume that $\bM$ has no $\bE$-torsion. 
This gives a birational 
map 
$$
g: \fP(d,\chi) 
\to 
\fP(d,\chi-1) \,.
$$

\subsection{}\label{WonE}

We make $\bW$ act on $E^{3d}$ by 
\begin{equation}
\Z^{3d} \owns (a_1,a_2,\dots) \mapsto (\tau^{3a_1},\tau^{3a_2},\dots) 
\in \Aut(E^{3d}) 
\label{act_LE}
\end{equation}
while $S(3d)$ permutes the factors. Then we have
\begin{equation}
  \label{Etau}
  \partial \, \bE \bM = \partial \, 
\bM(-3) = (-1,\dots,-1) \cdot \partial \bM \,. 
\end{equation}

\begin{Theorem}\label{t1}
The transformations $s_1,\dots,s_{3d-1}$ and $g$ generate
an action of $\bW$ by birational transformations of $\bigsqcup_\chi
\fP(d,\chi)$. 
The map 
$$
\partial: \fP(d,\chi) \to E^{3d} 
$$ 
is equivariant with respect to this 
action. 
\end{Theorem}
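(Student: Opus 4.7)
The plan is to verify the defining relations of $\bW$ on the generators $s_1,\dots,s_{3d-1},g$ as birational transformations, and then check equivariance of $\partial$ on each generator. For $s_k$, only $\bM_k$ is modified while $\bM_{k\pm 1}$ are preserved, so the two successive quotients at positions $k$ and $k+1$ are interchanged; this matches the transposition $(k,k+1)\in S(3d)$. For $g\cdot\bM=\bM_1$ with new parabolic chain $\bM_1\supset \cdots \supset \bM_{3d}\supset \bE\bM_1$, the first $3d-1$ successive quotients are $p_2,\dots,p_{3d}$. Using the centrality of $\bE$ and the identification $\cO_p(-1)\cong \cO_{\tau^{-1}p}$ coming from \eqref{Mtop}, the last quotient is
\[
\bE\bM/\bE\bM_1 \cong (\bM/\bM_1)(-3) \cong \cO_{\tau^{-3}p_1},
\]
which matches the action of $g$ on $E^{3d}$ given by the cyclic shift composed with $\tau^{-3}$ on the last coordinate.

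Next, the easier group relations. The involution $s_k^2=\textup{id}$ is immediate from the symmetric definition of $s_k$ as the unique alternative filtration at position $k$. For $|k-l|\ge 2$, the birational modifications $s_k$ and $s_l$ act on disjoint subquotients $\bM_{k-1}/\bM_{k+1}$ and $\bM_{l-1}/\bM_{l+1}$ of the chain, hence commute. The conjugation relation $g s_k g^{-1}=s_{k-1}$ (for $2\le k\le 3d-1$) follows because conjugation by $g$ relabels the parabolic filtration by one step, so position-$k$ reflection on $g\cdot\bM$ corresponds to position-$(k-1)$ reflection on $\bM$. The power $g^{3d}$ coincides with the shift $\bM\mapsto\bE\bM=\bM(-3)$, consistent with \eqref{Etau}; together these yield the full presentation of $\bW=S(3d)\ltimes\Z^{3d}$, modulo the braid relation $(s_ks_{k+1})^3=1$.

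The braid relation is a local statement on the length-three subquotient $N:=\bM_{k-1}/\bM_{k+2}$ with composition factors $\cO_{p_k},\cO_{p_{k+1}},\cO_{p_{k+2}}$: the operators $s_k$ and $s_{k+1}$ exchange adjacent composition factors on complete flags of $N$. When $N$ is semisimple with three pairwise distinct simples, the set of complete flags is canonically the flag variety of $\C^3$, and $s_k,s_{k+1}$ realize its two simple transpositions in the standard $S_3$-action; the braid relation then reduces to $(s_1s_2)^3=1$ in $S_3$.

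The main obstacle is thus to ensure that, on a Zariski-dense open subset of $\fP(d,\chi)$, every length-three subquotient of the parabolic chain is semisimple with pairwise distinct simples. Since $\Ext^1_{\bA}(\cO_p,\cO_q)$ does not vanish in general for distinct point modules on a Sklyanin algebra (for instance along line modules), splitting is not automatic. The way around this is to observe that the desired property is Zariski-open and $\fP(d,\chi)$ is irreducible of the expected dimension $d^2+1$, so it suffices to exhibit a single parabolic sheaf at which all the length-three subquotients split; such an example can be produced by specializing to a commutative degeneration of the Sklyanin algebra, where a coherent sheaf supported set-theoretically at distinct smooth points of the cubic $E$ is locally semisimple there and therefore splits. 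Irreducibility then propagates this to a dense open of the noncommutative moduli, on which the $s_k$ are genuine birational involutions satisfying the braid relations. Equivariance of $\partial$ for all of $\bW$ then follows from equivariance on generators and the verified group relations.
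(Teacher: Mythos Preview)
Your verification of the easier relations and of the equivariance of $\partial$ on generators is fine and parallels the paper. The difference is in how you handle the braid relation, and there the paper's argument is both shorter and avoids a gap in yours.

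The paper does not treat $(s_k s_{k+1})^3=1$ as a separate obstacle at all. It has already recorded (just before the theorem) that on the dense open locus where $\bM/\bE\bM=\bigoplus_{i=1}^{3d}\cO_{p_i}$ with the $p_i$ distinct, a parabolic structure is literally an ordering of the summands, and $s_1,\dots,s_{3d-1}$ act by the adjacent transpositions. Thus \emph{all} $S(3d)$ relations, braid included, hold on that open set and hence as birational identities. One then sets $s_0=g\,s_1\,g^{-1}$ and observes that $g$ cyclically permutes $s_0,\dots,s_{3d-1}$, so the remaining affine relations follow by conjugation from the finite ones. That is the whole proof.

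Your ``main obstacle'' paragraph is therefore attacking a non-issue, and the attack itself has a gap. First, once $\bM/\bE\bM$ is semisimple with distinct simples, \emph{every} subquotient of the parabolic chain is automatically semisimple with distinct simples; the non-vanishing of $\Ext^1_\bA(\cO_p,\cO_q)$ is irrelevant, since the question is whether the particular extension $\bM/\bE\bM$ splits, not whether all extensions of point modules split. Second, and more seriously, your degeneration argument does not produce a point in the moduli space you need. Specializing to a commutative Sklyanin algebra gives a sheaf on $\bP^2$, i.e.\ a point of the moduli space for a \emph{different} algebra; it is not a point of $\fP(d,\chi)$ for the fixed noncommutative $\bA$. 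To push semisimplicity from $\tau=0$ to the given $\tau$ you would need a family of moduli spaces over the $\tau$-line together with irreducibility of the total space, neither of which you establish. The direct route is the one the paper takes: $\bM/\bE\bM$ is a $\bB$-module, hence a length-$3d$ sheaf on the smooth curve $E$, and the locus where such a sheaf is a sum of $3d$ distinct points is dense.

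A small slip: for a semisimple module with three pairwise non-isomorphic simple factors, the set of complete flags is the six-element set of orderings, not the flag variety of $\C^3$. Your conclusion that $s_k,s_{k+1}$ act as simple transpositions on this set is correct; just drop the flag-variety language.
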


\begin{proof}
Clearly, $s_1,\dots,s_{3d-1}$ generate the symmetric group $S(3d)$, 
as do their conjugates under the action of $g$. Setting 
$$
s_0 = g \, s_1 \, g^{-1} 
$$
one sees that $g$ permutes $s_0,\dots,s_{3d-1}$ cyclically, verifying 
all relations in $\Lambda$. Equivariance of $\partial$ follows 
from \eqref{Etau}. 
\end{proof}

\subsection{}\label{s_inv}

Evidently, the $\bA[\bE^{-1}]$ module $\bM[\bE^{-1}]$ is not 
changed by the dynamics, that is to say, its isomorphism class
is an invariant of the dynamics. From a dynamical viewpoint, however, 
this is not very useful information, since no 
reasonable moduli space for $\bA[\bE^{-1}]$-modules exists, which is 
just another way of stating the fact that generic orbits of our 
dynamical system are dense in the analytic topology. 

Local analytic integrals of the dynamics may be constructed in this setting
if a representation of the noncommutative algebra by linear difference
operators is given. (This will be done in \cite{Rains2}.) The monodromy of
the difference equation corresponding to a module is the required
invariant. A important virtue of such local invariants is their convergence
to algebraic invariants as the noncommutative deformation is removed, which
is very useful, for example, for the study of averaging of perturbations.

\section{A concrete description of the action}\label{s_concrete}

\subsection{}

The goal of this section to make the action in 
Theorem \ref{t1} as explicit as possible. Consider
the exact sequence 
$$
1 \to \bW_0 \to \bW \xrightarrow{\,\, \chi\,\,} \Z \to 1 
$$
where $\chi$ is the sum of entries on $\Z^{3d}$ and zero 
on $S(3d)$. We have 
$$
\chi(w \cdot \bM) = \chi(\bM) + \chi(w)\,,
$$
so the subgroup $\bW_0$ acts on $\fP(d,\chi)$ for any $\chi\in \Z$. 
Since all of them are birational, we can focus on one, for example
$$
X = \fP(d,d+1)\,. 
$$

\subsection{}

We will see that there is a diagram of maps, with birational top row 
\begin{equation}
\xymatrix{
X \ar@{-->}[r]\ar[dr]_{\partial}&  S^g\bP^2 \times E^{3d-1} \ar[d] \\
 &  E^{3d-1}\\
}\,, 
\label{bir}  
\end{equation}
where $g=\binom{d-1}{2}$ is the genus of a smooth curve of degree $d$ and $S^g \bP^2$ 
parametrizes unordered collections $D\subset \bP^2$ of $g$ points. We view 
the product $E^{3d-1}$ as embedded in $E^{3d}$ via
\begin{equation}
E^{3d-1} = \left\{ \sum_{i=1}^{3d} p_i = \cO(d) + 3 \tau\right\} \subset E^{3d} 
\label{sumco}
\end{equation}
This subset is $\bW_0$ invariant.

\subsection{}
The 
action of $\bW_0$ has a particularly nice description in terms of \eqref{bir} 
and it agrees with the action on $E^d$ already defined in Section \ref{WonE}.

The symmetric group $S(3d)$ permutes the points $p_i\in E$ and does nothing
to $D\subset \bP^2$. It remains to define the action of the 
lattice generators 
$$
\alpha_{ij} = \delta_i - \delta_j \in \Z^{3d}\,,
$$
where $\delta_i=(0,\dots,0,1,0,\dots,0)$ form the 
standard basis of $\Z^n$. We claim 
$$
\alpha_{ij} (D,P) = (D',P')
$$
where $P' = \alpha_{ij} P$ as in Section \ref{WonE}, while the points 
$D'$ are found from the following construction. 

Let $C \subset \bP^2$ be the degree $d$ curve that meets $D$ and
$(-\delta_j)\cdot P$. Because of the condition \eqref{sumco} such a curve
exists and is generically unique.  The divisor $D'\subset C$ is found from
$$
D + p_i = D' + p'_j \in \Pic(C)\,. 
$$
Again, since $D'$ is of degree $g(C)$, generically, there 
is a unique effective divisor satisfying this equation. 

\begin{Theorem}\label{t2} 
This defines a birational action of $\bW_0$ which is birationally 
isomorphic  
to the action from Theorem \ref{t1}. 
\end{Theorem}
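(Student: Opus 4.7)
The plan is to construct an explicit birational equivalence
\[
\Phi : X \;\dashrightarrow\; S^g\bP^2 \times E^{3d-1}
\]
over $E^{3d-1}$ and then verify that $\Phi$ conjugates the $\bW_0$-action from Theorem \ref{t1} into the explicit rules above. A direct dimension count gives $\dim X = d^2+1 = 2g + (3d-1) = \dim(S^g\bP^2\times E^{3d-1})$, so $\Phi$ will be a birational isomorphism of irreducible varieties of the same dimension. To define $\Phi$ in the commutative case, I observe that a generic $\bM \in X = \fP(d,d+1)$ has the form $\iota_*L$ for $\iota : C \hookrightarrow \bP^2$ a smooth plane curve of degree $d$ through $\partial\bM$ and $L$ a line bundle of degree $d+g$ on $C$ (the degree being forced by $\chi = \deg L + 1 - g = d+1$). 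The auxiliary line bundle $L\otimes \cO_C(-1)$ has degree $g$ and Euler characteristic $1$, so for generic $L$ it has a one-dimensional space of sections by Riemann--Roch, and the zero divisor of its unique section defines the point $D \in S^g C \subset S^g \bP^2$; I then set $\Phi(\bM) := (D,\partial\bM)$.

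Conversely, given $(D,P)\in S^g\bP^2 \times E^{3d-1}$, the dimension count
\[
d(d+3)/2 - (g + 3d - 1) = 0
\]
(using \eqref{sumco} to save one condition from $P$) shows that the degree $d$ plane curve $C$ through $D\cup P$ is generically unique, and $\iota_*\bigl(\cO_C(1)(D)\bigr)$ together with the parabolic structure induced by the ordering of $P$ recovers $\Phi^{-1}$. In the noncommutative setting the same construction goes through: $L\otimes \cO_C(-1)$ is replaced by an appropriate graded piece of $\bM$, whose one-dimensionality for generic $\bM$ follows from the resolution of Section \ref{s_present}, and $D$ is recovered as the zero locus in $\bP^2$ of the corresponding element.

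For equivariance, the subgroup $S(3d)\subset\bW_0$ permutes the $3d$ parabolic factors on both sides, so compatibility there is immediate. For the translation lattice, generated by the roots $\alpha_{ij} = \delta_i - \delta_j$, the action on $\partial\bM$ produces the claimed shift $p_i \mapsto \tau^3 p_i$, $p_j \mapsto \tau^{-3} p_j$ by \eqref{act_LE} combined with \eqref{Etau}. To compute the induced action on $D$, I factor $\alpha_{ij}$ inside $\bW$ as a two-step Hecke modification $\alpha_{ij} = \delta_i \cdot(-\delta_j)$: after applying the negative shift at position $j$, the intermediate sheaf is supported on the degree-$d$ curve $C$ through $D\cup(-\delta_j)P$, which is generically unique by the same dimension count; the subsequent positive shift at position $i$ twists the underlying line bundle by $p_i - p_j'$, producing $\cO_C(1)(D + p_i - p_j')$, so that $D' = D + p_i - p_j'$ in $\Pic(C)$, exactly as in the explicit formula.

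The main obstacle will be completing this lattice-equivariance verification in the noncommutative setting, where the underlying algebra $\bA$ itself moves under $\bW_0$ and intermediate sheaves live on noncommutative blowups as in Section \ref{nc_examples}. In the commutative case the argument reduces to classical Picard arithmetic on a plane curve and recovers the Cremona-theoretic picture used by Sakai. The essential input for the Sklyanin case is Proposition \ref{p1} together with the lemma of Section \ref{nc_examples}, which together show that a Hecke modification at a marked point of $E$ moves the associated point by exactly $\tau^{\pm 3}$. This ensures that the intermediate curve through $D\cup(-\delta_j)P$ exists in the noncommutative framework and that the divisor arithmetic $D + p_i = D' + p_j'$ continues to govern the motion of $D$ after the noncommutative deformation.
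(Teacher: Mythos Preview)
Your overall architecture matches the paper's: define a birational map $\Phi$ to $S^g\bP^2\times E^{3d-1}$, check that $S(3d)$ acts compatibly, and verify the lattice action by passing through the intermediate moduli space $\fP(d,d)$ via a single Hecke modification. The commutative description of $D$ as the divisor of the unique section of $\cL(-1)$ is exactly what the paper records in Section~\ref{s_div}, and your dimension count for birationality is the same as in Proposition~\ref{p_div}.

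The genuine gap is in your noncommutative extension. You write that ``$L\otimes\cO_C(-1)$ is replaced by an appropriate graded piece of $\bM$'' and that ``$D$ is recovered as the zero locus in $\bP^2$ of the corresponding element.'' But an element of a graded piece of a Sklyanin module is not a polynomial and has no zero locus; there is no curve $C$, no line bundle on it, and no notion of support for $\bM$ in the noncommutative case. The paper's device, which you are missing, is to define $\dv\bM$ as the vanishing locus of the \emph{commutative} maximal minors of the submatrix $\Lb$ of the presentation $L:\bA(-1)^{d-1}\to\bA(1)\oplus\bA^{d-2}$. This is a subscheme of ordinary $\bP^2$ even when $\bA$ is noncommutative, because one simply reads the entries of $\Lb$ as linear forms in $x_1,x_2,x_3$ and takes ordinary determinants.

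The same issue recurs in your equivariance argument. You assert that ``the intermediate sheaf is supported on the degree-$d$ curve $C$ through $D\cup(-\delta_j)P$,'' but noncommutatively there is no support. What the paper does instead (Propositions after~\ref{p_div}) is show that the Hecke modification $\bM_p$ has a $d\times d$ presentation $L_p$ whose \emph{commutative} determinant cuts out a curve $C_p$, and then an explicit block-triangular computation \eqref{formL} shows $\det L_p$ has $\det\Lb$ as a factor, so $C_p\supset\dv\bM$. The final identification $\cL_p=\dv\bM+\cO(1)-p'$ is then purely commutative. Your appeal to Proposition~\ref{p1} and the affine lemma of Section~\ref{nc_examples} gives the motion of the points $p_i$ on $E$, but does not by itself produce the curve $C_p$ or establish the divisor relation on it; for that you need the presentation-matrix manipulation.
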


Remark. The case $d=3$ of the above dynamical system was considered in
\cite[\S 7]{KMNOY} as a description of the elliptic Painlev\'e equation in
terms of the arithmetic on a moving elliptic curve.  Since we only consider
this in terms of birational maps, this only shows that our dynamics is
birationally equivalent to elliptic Painlev\'e; we will see in Section
\ref{s_ell_p} that (for generic parameters) the description in terms of
sheaves agrees {\em holomorphically} with elliptic Painlev\'e.

We break up the proof into a sequence of Propositions. 

\subsection{}

A general point $\bM \in X$ is of the form 
$\bM = \Coker L$ where 
$$
L: \bA(-1)^{d-1}  \to  \bA(1) \oplus \bA^{d-2} \,,
$$
see Section \ref{s_present}. 
Consider the submatrix 
$$
\Lb: \bA(-1)^{d-1}  \to  \bA^{d-2} \,. 
$$
and let $\dv\bM\subset \bP^2$ be the subscheme cut out 
by the maximal minors of $\Lb$. Generically,   
$$
\dv\bM = \binom{d-1}{2}\textup{ distinct points} \,. 
$$

\begin{Proposition}\label{p_div} 
The map
$$
X \owns \bM \mapsto \left(\dv \bM,\partial \bM\right) \in S^g \bP^2 \times 
E^{3d-1} \,,
$$
where  $E^{3d-1}\subset E^{3d}$ as in \eqref{sumco}, is birational. 
\end{Proposition}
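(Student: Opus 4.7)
The plan is to prove birationality in two stages: a dimension count and an explicit construction of the inverse rational map, with the bulk of the work going into the second stage.

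\textbf{Dimension count.} First I would verify that both sides have dimension $d^2+1$. Since the forgetful map $\fP(d,d+1)\to\fM(d,d+1)$ is generically finite, $\dim X=\dim\fM(d,d+1)=d^2+1$ by Section \ref{s_present}. On the target,
$$
\dim(S^g\bP^2\times E^{3d-1}) = 2g+(3d-1) = (d-1)(d-2)+3d-1 = d^2+1.
$$
Hence it suffices to construct a rational inverse.

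\textbf{Locating $\dv\bM$.} Next I would show $\dv\bM\subset\supp\bM$: a rank-$\le d{-}3$ degeneracy of $\Lb$ at $p\in\bP^2$ forces $L$ to have rank $\le d-2$ there, so $\det L(p)=0$ and $p$ lies on the support curve. By the Porteous formula, the rank-$\le d{-}3$ locus of a generic $(d-2)\times(d-1)$ matrix of linear forms on $\bP^2$ consists of $g=\binom{d-1}{2}$ reduced points, so generically $\dv\bM\in S^g\bP^2$ is well defined.

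\textbf{Inverse construction.} Given a generic $(D,P)\in S^g\bP^2\times E^{3d-1}$ (so $\sum p_i=\cO(d)|_E+3\tau$ by Proposition \ref{p1}), I would reconstruct $\bM$ in four sub-steps. (a) Recover the support curve $C$: from the restriction sequence $0\to\cO_{\bP^2}(d-3)\to\cO_{\bP^2}(d)\to\cO_E(d)\to 0$, the linear system of degree-$d$ plane curves whose $E$-intersection divisor equals $\sum p_i$ has projective dimension $g-1$ (the hypothesis $3\tau\ne 0$ in $\Pic_0(E)$ ensures $h^0(\cO_E(d)-\sum p_i)=0$); imposing passage through the $g$ points of $D$ adds $g$ further independent conditions, pinning down $C$ uniquely. (b) Recover $\Lb$ from $D$ via Hilbert-Burch: a generic length-$g$ subscheme $D\subset\bP^2$ determines $\Lb$ up to the gauge group $GL(d-2)\times GL(d-1)$, with its maximal minors generating $I(D)$. (c) Recover the top row $\mathbf{q}$ by solving the linear equation $F=\sum_i(-1)^{i+1}q_i m_i$, where $F$ defines $C$ and the $m_i$ are the signed maximal minors of $\Lb$; this is solvable since $F\in I(D)=(m_1,\dots,m_{d-1})$ (because $D\subset C$), and the solution is unique modulo the syzygies of $(m_1,\dots,m_{d-1})$, which by Hilbert-Burch are generated by the rows of $\Lb$ --- precisely the shear action $\mathbf{q}\mapsto\mathbf{q}+\ell\cdot\Lb$. (d) The parabolic filtration on $\bM=\Coker L$ is then determined by the ordering of $P$: generically $\bM/\bE\bM$ decomposes as a direct sum of distinct point modules and orderings biject with filtrations.

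\textbf{Main obstacle.} The delicate point is transferring this construction from the commutative setting to the Sklyanin (noncommutative) case, where ``$\det L$'' and ``support curve'' require module-theoretic reinterpretation and the Hilbert-Burch syzygy statement must be established for matrices of linear forms over the Sklyanin algebra (a 3-dimensional Artin-Schelter regular algebra). For generic algebra parameters I expect this to follow by a flat-deformation argument from the commutative limit, using the irreducibility of $\fM(d,\chi)$ noted in Section \ref{s_present}.
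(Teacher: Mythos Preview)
Your high-level strategy---match dimensions, then reduce to the commutative case---is exactly what the paper does. The paper's proof is four lines: the image lands in \eqref{sumco} by Proposition~\ref{p1}; source and target have equal dimension; so it suffices that the map have degree $1$; and this can be checked when $\bA$ is commutative, where it is classical (Beauville, \cite{BeauDet}). Your explicit Hilbert--Burch reconstruction in steps (b)--(d) is essentially an unpacking of that cited classical fact, which is fine and correct.

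Two points are worth flagging. First, your step (a) is internally inconsistent: you invoke $3\tau\ne 0$ to get $h^0(\cO_E(d)-\sum p_i)=0$ and hence projective dimension $g-1$, but then imposing $g$ point conditions would make the system \emph{empty}, not zero-dimensional. The inverse construction lives in the commutative limit $\tau=0$, where $\sum p_i=\cO_E(d)$, the linear system has projective dimension $g$, and the $g$ points of $D$ cut it down to a unique $C$. Keep the two regimes separate.

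Second, and more importantly, your ``main obstacle'' is framed as needing a noncommutative Hilbert--Burch or a module-theoretic support curve. That is harder than necessary and is not what the paper does. The map $(\dv,\partial)$ is already defined for all Sklyanin parameters (the minors of $\Lb$ make sense regardless). Varying the algebra gives a morphism of irreducible families $X_T\to Y_T$ of equal relative dimension over the parameter base $T$; birationality of such a morphism is witnessed by a single reduced fibre over some point of the target. The commutative fibre supplies such a point, and that finishes the proof---no noncommutative inverse needs to be built. Once you see this, your proposal collapses to the paper's argument.
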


\begin{proof}
The statement about $\sum p$ follows from \eqref{sum_p}. 
Since the source and the target have
the same dimension, it is enough to show that 
the map has degree $1$. For this, we may assume that 
$\bA$ is commutative, in which case the claim is 
classically known, see e.g.\ \cite{BeauDet}. 
\end{proof}

\subsection{}\label{s_div} 

In fact, in the commutative case, $\bM$ is generically a line 
bundle $\cL$ on a smooth curve $C=\supp \bM$. From its resolution, 
we see that $\cL(-1)$ has a unique section. The divisor of 
this section on $C$ is $\dv \bM$.

\subsection{}

Similarly,  
a general point $\bM\subset \fP(d,d)$ 
is of the form $\bM = \Coker L$, where 
$$
L: \bA(-1)^{d}  \to  \bA^{d} \,. 
$$
is a $d\times d$ matrix 
$L$ of linear forms. The matrix $L$ is unique 
up to left and right multiplication by elements
of $GL(d)$. The description of this $GL(d)\times GL(d)$ 
quotient is classically known \cite{BeauDet} and given by 
$$
L \mapsto (C,\cL)
$$
where 
$$
C=\{\det L = 0 \} \subset \bP^2
$$ 
is a degree $d$ curve cut out by the usual, commutative 
determinant and $\cL$ is the cokernel of the commutative morphism, viewed
as a sheaf on $C$. Generically, $C$ is smooth, $\cL$ is a
line bundle, and 
$$
g(C) = \binom{d-1}{2}\,, \quad \deg \cL = g(C) - 1 + d\,.
$$

\begin{Proposition}
The point modules $\cO_p$ in $\partial\bM$ correspond 
to points $p\in C \cap E$. 
\end{Proposition}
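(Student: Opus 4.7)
The plan is to compute $\Hom_{\Tails\bA}(\bM,\cO_p)$ for each point module $\cO_p$ with $p\in E$ and to read off $C\cap E$ as the set of $p$'s for which this Hom is nonzero. Since $\bE$ acts as zero on every point module (they factor through $\bB$), any map $\bM\to\cO_p$ factors through $\bM/\bE\bM$, so $\Hom(\bM,\cO_p)=\Hom(\bM/\bE\bM,\cO_p)$. Using the presentation $0 \to \bA(-1)^d \xrightarrow{L} \bA^d \to \bM \to 0$, a morphism $\bA^d \to \cO_p$ in $\Tails$ is specified by a vector $v=(v_1,\dots,v_d)\in \cO_{p,0}^d\cong \C^d$. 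Each generator $x_k\in\bA_1$ acts on $\cO_{p,0}$ by $x_k\,v_0 = p_k v_1$, so any linear form $\ell\in\bA_1$ sends $v_0$ to $\ell(p)\, v_1$ with $\ell(p)$ the evaluation of $\ell$ as a \emph{commutative} linear form on $\bP^2$. Requiring the induced map $\bA(-1)^d\to\cO_p$ to vanish then gives the condition $L(p)^T v = 0$, so $\Hom(\bM,\cO_p)\cong\ker L(p)^T$, which is nonzero precisely when $\det L(p)=0$, i.e.\ $p\in C$.

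From this point the counting is forced. By Bezout $|C\cap E|=3d$ (generically distinct, transverse points), while the Hilbert polynomial of $\bM/\bE\bM$ is the constant $3\deg\bM = 3d$, so $\bM/\bE\bM$ has length exactly $3d$ in $\Tails\bA$. For each $p\in C\cap E$ the nonzero surjection $\bM/\bE\bM \twoheadrightarrow \cO_p$ places $\cO_p$ in the cosocle, and generically $\dim\ker L(p)^T = 1$ so each appears with multiplicity one. Summing over $p\in C\cap E$ gives a surjection $\bM/\bE\bM \twoheadrightarrow \bigoplus_{p\in C\cap E}\cO_p$ between objects of the same length $3d$, which is therefore an isomorphism. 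Thus $\bM/\bE\bM$ is semisimple with composition factors $(\cO_p)_{p\in C\cap E}$; a parabolic structure is merely a complete flag of this semisimple object, so $\partial \bM$ records the $\cO_p$ for $p\in C\cap E$ in some order.

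The main technical point is justifying the Hom computation, since $\Hom_{\Tails}$ is defined as a direct limit over truncations $\bM_{\ge k}$. One must verify that for $k$ larger than the regularity of $\bM$ the presentation of $\bM_{\ge k}$ is just a shift of the one for $\bM$, and that because $\cO_{p,n}\cong\C$ for every $n$ the direct limit stabilizes to the naive low-degree answer obtained above. The assumption that $\bM$ has no $\bE$-torsion is what keeps the presentation exact after applying $-\otimes_\bA \bB$, so that the identifications in $\Tails\bB$ (and hence on $E$ via the equivalence with $\Coh(E)$) work as expected. The match in divisor class with Proposition \ref{p1} --- namely $\sum p_i = \cO_E(d)$ for $\chi=d$ --- provides a useful consistency check, since $C\cap E$ certainly represents $\cO_{\bP^2}(d)|_E$.
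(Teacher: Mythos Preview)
Your argument is correct and follows essentially the same route as the paper's proof. The paper's version is simply a coordinatized form of your Hom computation: it picks generators $f_1,\dots,f_d$ so that only $f_1$ survives in $\cO_p$ (equivalently, takes $v=(1,0,\dots,0)$ in your notation) and observes that the relations then force the coefficients of $f_1$ to lie in $\bA\,p^\perp$, i.e.\ the corresponding column of $L(p)$ vanishes and hence $\det L(p)=0$. Your treatment is more thorough in that you make the converse explicit via the B\'ezout/length count, whereas the paper leaves this implicit (it is working at a generic point of the moduli space, where $\bM/\bE\bM$ is already assumed semisimple with distinct factors).
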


\begin{proof}
Let $f_1,\dots,f_d$ be the images in $\bM$ of the generators of
$\bA^{d}$. We may assume that $f_2,\dots,f_{d}$ are in 
the kernel of $\bM\to \cO_p$. The coefficient of $f_1$ in any 
relation among the $f_i$'s must belong to $\bA \, p^\perp$, 
whence the claim. 
\end{proof}

\subsection{}

Now suppose $\bM \in \fP(d,d+1)$, $p \in \partial \bM$, and define
$$
\bM_p = \Ker \left(\bM \to \cO_p\right) = \fm_p \bM  \,. 
$$
Then $\bM_p\in \fP(d,d)$ and we denote by $(C_p,\cL_p)$ be the curve 
and the line bundle that correspond to $\bM_p$.

\begin{Proposition}\label{p_meets_dv}
The curve $C_p$ meets $\dv \bM$. 
\end{Proposition}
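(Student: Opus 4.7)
The plan is to show the stronger statement $\dv \bM \subset C_p$ by constructing the presentation matrix $L'$ of $\bM_p$ from that of $\bM$ and exhibiting a rank drop of $L'$ at every point of $\dv \bM$.

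Setup: $\bM$ is generated by $e_0 \in \bM_{-1}$ (from the $\bA(1)$ summand of $L$'s target) and $e_1, \ldots, e_{d-2} \in \bM_0$, so $\bM_0 = \bA_1 e_0 \oplus \C\langle e_1, \ldots, e_{d-2}\rangle$ has dimension $d+1$. The surjection $\phi \colon \bM \to \cO_p$ in $\Tails \bA$ is determined by a linear functional $\phi_0$ on $\bM_0$ subject to compatibility conditions with the degree~$1$ relations of $\bM$, the solvability of which singles out $p$ on the cubic $E$. A basis $g_1, \ldots, g_d$ of $(\bM_p)_0 = \Ker \phi_0$ can be chosen explicitly, with $g_1, g_2, g_3$ obtained from $x_1 e_0, x_2 e_0, x_3 e_0$ by subtracting suitable multiples of $e_1$, and $g_4, \ldots, g_d$ obtained analogously from $e_2, \ldots, e_{d-2}$. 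This gives the surjection $\bA^d \to \bM_p$ and hence the presentation $L' \colon \bA(-1)^d \to \bA^d$.

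The columns of $L'$ are the $d$ degree-$1$ relations among the $g_i$, which I would assemble as combinations of (i) the $d-1$ columns of $L$ rewritten in the new basis and (ii) the $3$ Sklyanin relations applied to $e_0$, subject to the constraint that the $e_1$-contribution cancels. These $d+2$ relations span the $(d+2)$-dimensional kernel of $\bA_1 \otimes \bM_0 \to \bM_1$; the constraint drops the dimension by $2$, leaving precisely the $d$ relations defining $L'$. Crucially, the block of $L'$ in the $g_4, \ldots, g_d$ columns is built only from rows $2, \ldots, d-2$ of the bottom block $\Lb$ of $L$, paired with the combination-coefficient matrix of type (i), whereas the $g_1, g_2, g_3$ columns mix Sklyanin constants, the quadratic top row $L_{top}$ of $L$, and the defining data of $\phi_0$.

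The key claim, and the main obstacle, is to verify that at a point $q \in \dv \bM$ (where $\Lb(q) v = 0$ for some nonzero $v \in \C^{d-1}$), the matrix $L'(q)$ drops rank. The idea is to use $v$ to build a row dependence: combining the type-(i) relations by weights $v_k$ produces, at $q$, zero contribution to the $g_4, \ldots, g_d$ columns; the surviving contributions (to the $g_1, g_2, g_3$ columns) together with the three type-(ii) relations yield four vectors in a $3$-dimensional ambient, hence a nontrivial linear dependence. Showing that this dependence also respects the $e_1$-cancellation constraint (so that it lies in the actual row space of $L'$) should follow by carefully unpacking the condition that $p$ lies on $E$. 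As a sanity check, in the commutative limit, $C_p = \supp \bM_p = C$ and $\dv \bM$ is the zero divisor of the unique section of $\cL(-1)$ on $C$ by Section~\ref{s_div}, so $\dv \bM \subset C = C_p$ holds tautologically; this guides the noncommutative calculation.
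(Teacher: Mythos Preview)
Your strategy---show $\dv\bM\subset C_p$ by exhibiting a rank drop of the presentation matrix of $\bM_p$---is exactly right, but your execution is more complicated than necessary and leaves a real gap. You explicitly flag ``the main obstacle'': checking that the linear dependence you build at $q\in\dv\bM$ respects the $e_1$-cancellation constraint and hence actually lies among the relations of $L'$. Saying this ``should follow by carefully unpacking the condition that $p$ lies on $E$'' is not a proof, and it is not obvious that it goes through without a fight.

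The paper sidesteps this entirely by a smarter choice of generators for $\bM_p$. Rather than taking \emph{three} generators $x_1e_0,x_2e_0,x_3e_0$ (corrected by $e_1$, forcing you to discard one of the $e_i$ and scramble $\Lb$), use the free resolution
\[
0 \to \bA(-1) \xrightarrow{(u_1,u_2)^T} \bA^{\oplus 2} \xrightarrow{(l_1\ l_2)} \bA(1) \to \cO_p \to 0
\]
of the point module. The two elements $l_1f,l_2f$ already lie in $\bM_p$, and together with \emph{all} $d-2$ of the $g_i$ (which one first adjusts by a change of basis so that they map to $0$ in $\cO_p$) they give exactly $d$ generators of $(\bM_p)_0$. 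Because the $g_i$ are in the kernel, the coefficient of $f$ in every original relation of $\bM$ must lie in $\bA\,p^\perp$, i.e.\ has the form $r_1l_1+r_2l_2$. The $d-1$ relations of $\bM$, together with the single syzygy $u_1(l_1f)+u_2(l_2f)=0$ coming from the resolution, give the $d$ relations of $\bM_p$, and the presentation matrix is block triangular:
\[
L_p=\begin{pmatrix} u & r\\ 0 & \Lb\end{pmatrix},
\]
with $\Lb$ appearing \emph{intact} as the lower-right $(d-2)\times(d-1)$ block. Expanding $\det L_p$ along the first column expresses it as an $\bA$-linear combination of the maximal minors of $\Lb$, so $\det L_p$ vanishes wherever those minors do. No cancellation constraint, no appeal to the equation of $E$: the condition $p\in E$ has already been used up in the existence of the resolution of $\cO_p$. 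The trick you were missing is that two lifts of $f$ suffice rather than three, and that is precisely what the resolution of the point module supplies.
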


\begin{proof}
Let $f$ and $g_1,\dots,g_{d-2}$ be the images in $\bM$ of the 
generators of $\bA(1)$ and $\bA^{d-2}$, respectively. We may 
chose them so that all $g_i$ are mapped to $0$ in $\cO_p$. 
Let 
$$
0 \to \bA(-1) \xrightarrow{\,\, 
  \begin{bmatrix}
    u_1 \\ u_2 
  \end{bmatrix}
\,\,}
 \bA^{\oplus 2} 
\xrightarrow{\,\, 
  \begin{bmatrix}
    l_1 \, \, l_2 
  \end{bmatrix}
\,\,}
\bA(1) \to \cO_p \to 0 
$$
be a free resolution. 
All relations in $\bM$ must be of 
the form 
$$
(r_1 l_1 + r_2 l_2) f + \sum s_i \, g_i =0\,, \quad r_1,r_2,s_i \in \bA_1 \,. 
$$
There are $d-1$ linearly independent 
relations like this and the coefficients $s_i$ in them form 
the matrix $\Lb$. 

We observe that $l_1 f,l_2 f, g_1,\dots,g_{d-2}$ generate $\bM_p$ and, for 
this presentation, the matrix $L_p$ has the 
block form 
\begin{equation}
  L_p=
\begin{bmatrix}
u & r \\
0 &  \Lb
\end{bmatrix}
\label{formL}
\end{equation}
The proposition follows. 
\end{proof}

\subsection{}

Denote
$$
\partial \bM = (p,p_2,\dots,p_{3d}) \,.
$$
Since $\bM_p=\fm_p \bM$ still surjects to $\cO_{p_i}$, $i\ge 2$, we have
$$
\partial \bM_p = (p',p_2,\dots,p_{3d}) 
$$
for some $p'\in E$ and from \eqref{sum_p} we see that 
$$
p' = \tau^{-3}(p) \,.
$$
{} From the proof of Proposition \ref{p_meets_dv} we note that 
$$
p' = \{u_1 = u_2 =0\} \,.
$$

\subsection{}

The following Proposition concludes the proof of Theorem \ref{t2} 

\begin{Proposition}
We have $\cL_p = \dv \bM + \cO(1)-p'$ in $\Pic(C_p)$. 
\end{Proposition}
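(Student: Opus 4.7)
The plan is to reduce this to a classical commutative computation via the block form $L_p = \begin{bmatrix} u & r \\ 0 & \Lb \end{bmatrix}$ of \eqref{formL}. Although $\bM_p$ is noncommutative, the objects $C_p$, $\cL_p$, $\dv \bM$, and $p'$ are all defined from commutative data derived from $L_p$ (the commutative determinant of $L_p$, the commutative maximal minors of $\Lb$, and the vanishing locus of $u_1, u_2$), so the statement is essentially a classical one; the noncommutativity enters only through the fact that the linear forms $u_1, u_2 \in \bA_1$ determined by the relation $l_1 u_1 + l_2 u_2 = 0$ in $\bA$ cut out $p' \ne p$ in general.

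First I would apply the commutative snake lemma to the short exact sequence of two-term complexes on $\bP^2$ whose sub-complex is $\cO(-1) \xrightarrow{u} \cO^{2}$ (the first column of $L_p$, embedded into the first two rows) and whose quotient is $\cO(-1)^{d-1} \xrightarrow{\Lb} \cO^{d-2}$. Under the genericity hypotheses of Section~\ref{s_present} all three complexes have vanishing kernel, so the snake lemma yields
$$
0 \to \Ker \Lb \to \Coker u \to \cL_p \to \Coker \Lb \to 0.
$$
Classical identifications then give the outer terms: the twisted Koszul resolution of $p'$ shows $\Coker u = \mathcal{I}_{p'}(1)$; the Eagon--Northcott (Hilbert--Burch) complex for the $(d-2) \times (d-1)$ matrix $\Lb$, whose maximal-minor ideal cuts out the codimension-$2$ subscheme $\dv \bM$, gives $\Ker \Lb = \cO(1-d)$ generated by the signed-minor syzygy $v = (m_1, -m_2, \ldots, \pm m_{d-1})$, as well as $\Coker \Lb = \cO_{\dv \bM}$.

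The key calculation is to identify the connecting map $\delta\colon \cO(1-d) \to \mathcal{I}_{p'}(1)$ as multiplication by $\det L_p$. Tracing the snake construction, the generator $v$ lifts to $(0, v) \in \cO(-1)^d$, whose image under $L_p$ is $(rv, 0) \in \cO^d$; projecting to $\Coker u = \mathcal{I}_{p'}(1)$ and then including into $\cO(1)$ via the Koszul map $(a_1, a_2) \mapsto -u_2 a_1 + u_1 a_2$ yields $-u_2(rv)_1 + u_1(rv)_2$. On the other hand, cofactor expansion of $\det L_p$ along its first column gives $\det L_p = u_1 M_{11}(L_p) - u_2 M_{21}(L_p)$, and a further expansion of each $M_{i1}(L_p)$ along its first row (which equals $r_i$) identifies $M_{i1}(L_p)$ with $(rv)_i$; the two expressions agree.

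Combining the standard short exact sequences $0 \to \cO(1-d) \xrightarrow{\det L_p} \cO(1) \to \cO_{C_p}(1) \to 0$ and $0 \to \mathcal{I}_{p'}(1) \to \cO(1) \to \cO_{p'} \to 0$ (together with $p' \in C_p$) then identifies $\mathcal{I}_{p'}(1)/\delta(\cO(1-d))$ with $\Ker(\cO_{C_p}(1) \to \cO_{p'}) = \cO_{C_p}(1-p')$. Splitting the snake sequence at its middle term yields
$$
0 \to \cO_{C_p}(1-p') \to \cL_p \to \cO_{\dv \bM} \to 0,
$$
and since $\dv \bM \subset C_p$ by Proposition~\ref{p_meets_dv}, taking first Chern classes in $\Pic(C_p)$ gives the equality $\cL_p = \cO(1) - p' + \dv \bM$. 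The principal technical hurdle is the connecting-map identification, which requires careful bookkeeping of signs in the cofactor expansion.
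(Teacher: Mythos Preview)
Your proof is correct and takes a genuinely different, more computational route than the paper's. The paper's proof is essentially one line: since $C_p$, $\cL_p$, $\dv\bM$, and $p'$ are all read off from the entries of $L_p$ viewed as a matrix of commutative linear forms, one may simply interpret the block matrix \eqref{formL} inside the commutative theory; there it is the presentation of the Hecke modification at $p'$ of a commutative sheaf whose $\Lb$-block agrees with that of $\bM$, and Section~\ref{s_div} then gives the line-bundle identity directly (commutatively $\cL_p = \cL - p'$ and $\cL = \dv\bM + \cO(1)$). You instead carry out an explicit snake-lemma/Eagon--Northcott computation on the block matrix, identifying the connecting map with $\det L_p$ by cofactor expansion and extracting the short exact sequence $0\to\cO_{C_p}(1-p')\to\cL_p\to\cO_{\dv\bM}\to 0$. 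Your argument is self-contained and makes no appeal to the interpretation of $L_p$ as a Hecke modification, at the cost of the minor-expansion bookkeeping you flag; the paper's argument is shorter but asks the reader to supply the reverse-engineering step (that any block matrix of the form \eqref{formL} arises, commutatively, from a Hecke modification at $p'$ of a sheaf with the same $\Lb$).
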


\begin{proof}
This is a purely commutative statement, in fact, just 
a restatement of the remark in Section  \ref{s_div}. 
\end{proof}

\section{The elliptic Painlev\'e equation}\label{s_ell_p}

Let us consider the case $d=3$ in more detail.  In this case, we can be
fairly explicit about the moduli space, at least for sufficiently general
parameters.  We suppose that the $3d=9$ points of $\partial \bM$ are distinct
(and ordered), so that specifying a point in $\ParHilb(3,\chi)$ is
equivalent to specifying the corresponding point in $\Hilb(3,\chi)$.
We consider the case $\chi=1$, as in that case we need consider only one
shape of presentation.  (Of course $\chi=-1$ would equally as well, by
duality; the case $\chi=0$ is somewhat trickier, though the calculation
below of the action of the Hecke modifications implies a similar
description for that moduli space.)

We naturally restrict
our attention to semistable sheaves, and note that a sheaf in $\Hilb(3,1)$
is semistable iff it is stable, iff it has no proper subsheaf with positive
Euler characteristic.

\begin{Lemma}
Suppose the sheaf $\bM\in \Hilb(3,1)$ has a free resolution of the form
\[
0\to \bA(-2)^2\to \bA(-1)\oplus \bA\to \bM\to 0,
\]
or in other words is generated by elements $f$, $g$ of degree $1$ and $0$
satisfying relations
\[
v_1f+w_1g=v_2f+w_2g=0,
\]
with $v_1,v_2\in A_1$, $w_1,w_2\in A_2$.  If $\bM$ is stable, then $v_1$,
$v_2$ are linearly independent and there is no element $x\in A_1$ such that
$v_1x=w_1$ and $v_2x=w_2$.
\end{Lemma}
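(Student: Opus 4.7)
The plan is to argue both assertions by contraposition: I would assume $v_1,v_2$ linearly dependent in the first case, and the existence of $x\in\bA_1$ with $v_ix=w_i$ in the second, and in each case produce an explicit submodule of $\bM$ incompatible either with stability or with $\bM$ being one-dimensional.

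For the linear independence claim, a $GL(2)$ change of basis on $\bA(-2)^2$ lets me normalize to $v_2=0$, which does not alter $\bM$. Injectivity of the presentation then forces $w_2\neq 0$, and the degenerate subcase $v_1=0$ is disposed of separately: there the generator $f$ would have trivial annihilator, so $\bA f\cong \bA(-1)$ would sit inside $\bM$ with quadratic Hilbert polynomial, directly contradicting the Hilbert polynomial $3n+1$ of $\bM$. Otherwise, with $v_1\neq 0$ and $w_2\neq 0$, a syzygy calculation using that $\bA$ is a domain (so $\alpha v_1=0$ forces $\alpha=0$) identifies the annihilator of $g\in \bM_0$ with the left ideal $\bA w_2$. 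From the short exact sequence
\[
0\to \bA(-2)\xrightarrow{\,\,w_2\,\,}\bA\to \bA/\bA w_2\to 0
\]
one reads off that $\bA g\cong \bA/\bA w_2$ has Hilbert polynomial $2n+1$, giving a proper subsheaf of $\bM$ of degree $2$ with Euler characteristic $1>0$; this contradicts stability via the criterion recalled just before the lemma.

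For the non-existence of $x$, any relation $ag=0$ in $\bM$ lifts to a pair $(\alpha,\beta)\in \bA(-2)^2$ with $\alpha v_1+\beta v_2=0$ and $\alpha w_1+\beta w_2=a$. Substituting $w_i=v_ix$ collapses the second equation to $a=(\alpha v_1+\beta v_2)x=0$, so $g$ has trivial annihilator in $\bA$ and $\bA g\cong \bA$ embeds in $\bM$ as a submodule with quadratic Hilbert polynomial, impossible for a one-dimensional sheaf with Hilbert polynomial $3n+1$.

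The only point requiring care in executing this plan is the annihilator calculation in each case, which rests on $\bA$ being a Noetherian domain so that nonzero left multiplication is injective. One should also verify that the exhibited submodules do not vanish upon passing from $\Mod(\bA)$ to $\Tails(\bA)$, but this is automatic since their Hilbert polynomials have positive degree. I do not foresee any deeper obstacle.
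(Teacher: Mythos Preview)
Your proof is correct and follows essentially the same route as the paper: both reduce the first claim to $v_2=0$ and identify $\bA g$ with the cokernel of $w_2:\bA(-2)\to\bA$, and both obtain the second contradiction from the fact that $g$ generates a free copy of $\bA$ inside $\bM$. The only cosmetic difference is that for the second assertion the paper performs the change of generator $f\mapsto f\pm xg$ to exhibit $\bA$ as a direct \emph{summand}, whereas you compute the annihilator of $g$ directly to exhibit $\bA$ as a \emph{submodule}; either way one contradicts one-dimensionality, and your explicit handling of the degenerate case $v_1=v_2=0$ is a harmless extra bit of care.
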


\begin{proof}
If $v_1$, $v_2$ are not linearly independent, then without loss of
generality we may assume $v_2=0$.  But then $w_2\ne 0$ (by injectivity) and
thus the submodule generated by the image of $\bA$ is the cokernel of the map
$w_2:\bA(-2)\to \bA$, and has Euler characteristic 1, violating stability.

Similarly, if the second condition is violated, then we may replace $f$ by
$f-xg$ and thus eliminate the dependence of the relations on $g$.  But then
$\bA$ is a direct summand of $\bM$, violating the condition that $\bM$ have rank
$0$.
\end{proof}

Remark.  One can show that every stable sheaf in $\Hilb(3,1)$ must have a
presentation of the above form, but for present purposes, we simply
restrict our attention to the corresponding open subset of the stable
moduli space, which by the following proof is projective.

\begin{Theorem}
  Suppose $p_1,\dots,p_9$ is a sequence of $9$ distinct points of $E$ such
  that $p_1+\cdots+p_9={\cal O}(3)-6\tau$.  Then the moduli space of stable
  sheaves in $\Hilb(3,1)$ with
  $\bM|_E\supset (p_1,\dots,p_9)$ is canonically isomorphic to the blowup
  of $\bP^2$ in the images of $p_1,\dots,p_9$ under the embedding $E\to
  \bP^2$ coming from ${\cal L}_1\sim {\cal O}(1)-3\tau$.
\end{Theorem}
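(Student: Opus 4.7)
The approach is to construct a canonical morphism $\Phi$ from the moduli space to $\bP^2$ using the presentation from the preceding Lemma, and to identify it with the contraction $\Bl_9 \bP^2 \to \bP^2$. Given a stable $\bM$ with presentation $0 \to \bA(-2)^2 \to \bA(-1) \oplus \bA \to \bM \to 0$, the relations $v_i f + w_i g = 0$ provide linearly independent $v_1, v_2 \in \bA_1$, whose span $V = \langle v_1, v_2\rangle$ defines a point $[V] \in \operatorname{Gr}(2, \bA_1) \cong \bP(\bA_1^*) \cong \bP^2$. The full gauge group on presentations ($GL(2)$ on $\bA(-2)^2$, rescaling of $f, g$, and the mixing $f \mapsto f + xg$ for $x \in \bA_1$) preserves $V$, so $\bM \mapsto [V]$ is a well-defined morphism $\Phi$; this is the variant of the divisor map $\dv \bM$ of Section \ref{s_concrete} appropriate to this grading.

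Restricted to the $2$-dimensional locus of sheaves with fixed $\partial \bM = (p_1, \ldots, p_9)$, I claim $\Phi$ is birational onto $\bP^2$. A parameter count shows the generic fiber is $0$-dimensional: the $12$-dimensional data $(w_1, w_2) \in \bA_2^2$ is cut down by the $4$-dimensional residual gauge (after fixing $V$) and by the $8$ conditions from $\partial \bM$. That the generic degree is exactly one is established by specialization to the commutative limit, where the unique sheaf recovers the classical statement that a smooth cubic through $9$ points plus a generic extra point is unique, equipped with a unique degree-$1$ line bundle.

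The exceptional locus of $\Phi$ consists of those $[V]$ with $V = p_i^\perp$ for some $i$, i.e., $V$ annihilates the degree-$0$ generator of the point module $\cO_{p_i}$. Under the identification $\bP^2 = \bP(\bA_1^*)$, the $9$ such points are precisely the images of the $p_i$ under the embedding $E \hookrightarrow \bP^2$ coming from $\cL_1 \sim \cO(1) - 3\tau$: the shift from $\cO(1)$ to $\cL_1$ reflects the grading-twist in the Sklyanin identification of point modules, where the pencil $V$ lives in degree $1$ of the presentation while the moduli of point modules twists by $\tau$ per degree shift. At such $[V]$ the surjection $\bM \twoheadrightarrow \cO_{p_i}$ is partially automatic and the fiber is a $\bP^1$, paralleling the commutative case in which every cubic in the anticanonical pencil through the nine points automatically passes through each $p_i$.

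Since $\Phi$ is a birational morphism of smooth surfaces contracting $9$ disjoint $(-1)$-curves over $9$ distinct points, it is uniquely (hence canonically) isomorphic to the contraction $\Bl_9 \bP^2 \to \bP^2$. The subtlest point is identifying the embedding as via $\cL_1 \sim \cO(1) - 3\tau$ rather than the naive $\cO(1)$; this twist by $-3\tau$ is the key noncommutative feature, invisible when $\tau = 0$ but essential for the canonical match with Sakai's elliptic Painlev\'e surface.
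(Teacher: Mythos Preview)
Your overall strategy matches the paper's: define $\Phi$ via the span $\langle v_1,v_2\rangle\subset \bA_1\cong H^0(\cL_1)$, then analyze the fibers.  However, the proposal has two genuine gaps that the paper fills with explicit computation.

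First, the fiber analysis is incomplete.  Your parameter count only addresses the \emph{generic} fiber; you must also show that when $v_1,v_2$ have a common zero at some $p\in E$ \emph{not} among the $p_i$, the fiber is still a single point, and that over each $p_i$ the fiber is exactly $\bP^1$ (not merely positive-dimensional).  The paper handles all three cases simultaneously via a key lemma: the determinant map $(w_1,w_2)\mapsto v_2w_1-v_1w_2$ from $H^0(\cL_0\otimes\cL_1)^2$ to $H^0(\cL_0\otimes\cL_1^2)$ is surjective if $v_1,v_2$ have no common zero, and has image of codimension~$1$ (sections vanishing at the common zero) otherwise.  This lemma is what makes the count sharp in every fiber, and your specialization-to-the-commutative-case argument does not substitute for it, since it says nothing about special fibers of $\Phi$ for a fixed noncommutative $\tau$.

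Second, you assert that the source is a smooth surface but never prove it.  Without smoothness, a birational morphism contracting nine $\bP^1$'s need not be the blowup.  The paper verifies smoothness directly by computing the tangent space: one must show that for every stable $\bM$ the deformation map $(v'_1,v'_2,w'_1,w'_2)\mapsto v'_1w_2-v'_2w_1+v_1w'_2-v_2w'_1$ surjects onto $H^0(\cL_0\otimes\cL_1^2)$, so that the tangent space has constant dimension~$2$.  This again uses the determinant lemma, together with the stability condition to rule out the degenerate case where $v_1,v_2,w_1,w_2$ all vanish at a point.  Your proposal contains no analogue of this step.
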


\begin{proof}
  We may view the coefficients $v_1$, $v_2$, $w_1$, $w_2$ as global
  sections of line bundles on $E$; to be precise,
\[
v_1,v_2\in \Hom(\bA(-2),\bA(-1)) 
     \cong \Hom(\bB(-2),\bB(-1))
     \cong H^0({\cal L}_1)
\]
and
\[
w_1,w_2\in \Hom(\bA(-2),\bA)
       \cong \Hom(\bB(-2),\bB)
       \cong H^0({\cal L}_0\otimes {\cal L}_1).
\]
Now, $H^0({\cal L}_1)$ is $3$-dimensional, and $v_1$,$v_2$ are linearly
independent by stability, and we thus obtain a morphism from the moduli
space of stable sheaves with the above presentation to $\bP^2$.  Note also
that in this identification, the constraint on $\partial \bM$ reduces to a
requirement that
\[
w_1v_2-w_2v_1
\]
vanish at $p_1$,\dots,$p_9$.

We need to show that this morphism has $0$-dimensional fibers except
over the points $p_1$,\dots,$p_9$, where the fiber is $\bP^1$; this together
with smoothness will imply the identification with the blowup.

Note that a point $p\in E$ corresponds to the subspace
\[
H^0({\cal L}_1(-p))\subset H^0({\cal L}_1)
\]
and thus the cases to consider are those in which $v_1,v_2$ have no common
zero, those in which they have a single common zero not of the form $p_i$,
and those in which they have a common zero at $p_i$.  The key fact is the
following statement about global sections of line bundles on elliptic curves.

\begin{Lemma}
Let $v_1$, $v_2$ be linearly independent global sections of ${\cal L}_1$.
Then the map
\[
H^0({\cal L}_0\otimes {\cal L}_1)^2\to H^0({\cal L}_0\otimes {\cal L}_1^2)
\]
given by $(w_1,w_2)\mapsto v_2w_1-v_1w_2$ is surjective if $v_1$, $v_2$
have no common zero, and otherwise has image of codimension 1, consisting
of those global sections vanishing at said common zero.
\end{Lemma}

\begin{proof}
Consider the complex
\[
0\to H^0({\cal L}_0)\to H^0({\cal L}_0\otimes {\cal L}_1)^2\to H^0({\cal
  L}_0\otimes {\cal L}_1^2)\to 0,
\]
with the left map being $x\mapsto (v_1x,v_2x)$.  This has Euler
characteristic $3-6-6+9=0$, and is exact on the left, so it will suffice to
understand the middle cohomology.  Now, the kernel of the above determinant
map consists of pairs $(w_1,w_2)$ with $v_2w_1=v_1w_2$.  Assuming neither
of $w_1,w_2$ is 0 (which would clearly imply $w_1=w_2=0$), we find that
\[
\dv(v_2)+\dv(w_1)=\dv(v_1)+\dv(w_2).
\]

If $v_1$, $v_2$ have no common zero, we conclude that
\[
\dv(w_1)-\dv(v_1)=\dv(w_2)-\dv(v_2)
\]
is an effective divisor, and thus
\[
w_1/v_1 = w_2/v_2\in H^0({\cal L}_0).
\]
But this gives exactness in the middle.

Similarly, if $v_1$, $v_2$ both vanish at $p$, then the same reasoning
shows that
\[
w_1/v_1 = w_2/v_2\in H^0({\cal L}_0(p)).
\]
In particular, we find that the middle cohomology has dimension at most 1;
since the right map is clearly not surjective in this case, its image must
therefore have codimension 1 as required.
\end{proof}

In particular, if $v_1$, $v_2$ have no common zero, the map from pairs
$(w_1,w_2)$ to the corresponding determinant is surjective, and thus there
is a unique pair $(w_1,w_2)$ up to equivalence compatible with the
constraints on $\partial \bM$.  If $v_1$, $v_2$ have a common zero not of the
form $p_i$, then the map fails to be surjective, and the only allowed
determinant is 0.  We thus obtain a $1$-dimensional space of possible pairs
(modulo multiples of $(v_1,v_2)$), giving rise to a single equivalence
class of stable sheaves.  Finally, if $v_1$, $v_2$ have a common zero at
$p_i$, then the 1-dimensional space of allowed determinants pulls back to a
2-dimensional space of pairs $(w_1,w_2)$ modulo multiples of $(v_1,v_2)$,
and thus gives rise to a $\bP^1$ worth of stable sheaves.

It remains to show smoothness.  The tangent space to a sheaf with
presentation $v_1f+w_1g=v_2f+w_2g=0$ consists of the set of quadruples
$(v'_1,v'_2,w'_1,w'_2)$ such that
\[
v'_1w_2-v'_2w_1+v_1w'_2-v_2w'_1
\]
vanishes at $p_1$,\dots,$p_9$.  (More precisely, it is the quotient of this
space by the space of trivial deformations, induced by infinitesimal
automorphisms of $\bA(-2)^2$ and $\bA(-1)\oplus \bA$; stability implies
that the trivial subspace has dimension $4+5-1=8$, independent of $\bM$.)
It will thus suffice to show that this surjects onto $H^0({\cal L}_0\otimes
{\cal L}_1^2)$, since then the dimension will be independent of $\bM$ (and
equal to $18-8-(9-1)=2$).  If $v_1$, $v_2$ have no common zero, this
follows directly from the lemma.  If they have a common zero at $p$, but
$v_1w_2-v_2w_1\ne 0$, then since $p$ is equal to at most one $p_i$, we
conclude that one of $w_1$ or $w_2$ must not vanish at $p$, so again the
lemma gives surjectivity.

Finally, if $v_1$,$v_2$,$w_1$,$w_2$ all vanish at $p$, then
$v_1w_2-v_2w_1=0$.  But then we again find as in the lemma that
\[
w_1/v_1=w_2/v_2\in H^0({\cal L}_0),
\]
and this violates stability.
\end{proof}

Remark.  Presumably this result could be extended to remove the constraint
that the base points are distinct, except that the blowup will no longer be
smooth, since the base locus of the blowup is then singular.

We also wish to understand how the action of the affine Weyl group
$\Lambda_0\cong \tilde{A}_8$ translates to this explicit description of the
moduli space.  The action of $S_9$ is essentially trivial, as this simply
permutes the points $p_1$,\dots,$p_9$.  It remains to consider the
generator $s_0$.  This can be performed in two steps: first shift down
$p_1$ to obtain a sheaf with Euler characteristic 0 and
$\partial \bM = (p_2,\dots,p_9,\tau^{-3}(p_1))$, then shift up $p_9$ to
obtain a sheaf with Euler characteristic $1$ and
$\partial \bM = (\tau^3(p_9),p_2,\dots,p_8,\tau^{-3}(p_1))$ as required.

Let ${\cal O}_{p_1}$ have the free resolution
$$
0 \to \bA \xrightarrow{\,\, 
  \begin{bmatrix}
    u_1 \\ u_2 
  \end{bmatrix}
\,\,}
 \bA(-1)^{\oplus 2} 
\xrightarrow{\,\, 
  \begin{bmatrix}
    l_1 \, \, l_2 
  \end{bmatrix}
\,\,}
\bA \to \cO_{p_1} \to 0.
$$
Then there are two cases to consider.  If $\langle v_1,v_2\rangle \ne
\langle l_1,l_2\rangle$, then we may choose our generators $f$,$g$ of $\bM$
in such a way that $g$ maps to 0 in ${\cal O}_{p_1}$.  Then we have
relations of the form
\[
(r_{11}l_1+r_{21}l_2)f + v_1 g= (r_{21}l_1+r_{22}l_2)f + v_2 g=0,
\]
and the submodule generated by $l_1f$, $l_2f$, $g$ has a presentation
of the form
\[
0\to \bA(-2)^3\xrightarrow{L} \bA(-1)^3\to \bM_{p_1}
\]
with
\[
L=
\begin{bmatrix}
u_1 &r_{11}&r_{12}\\
u_2 &r_{21}&r_{22}\\
 0  & v_1  & v_2
\end{bmatrix}.
\]
Generically, $\det(L)$ has divisor $\tau^{-3}(p_1)+p_2+\cdots+p_9$, and
thus has rank $2$ at $p_9$.  Then suitable row and column operations
recover a new matrix of the above form except with $u'_1$, $u'_2$ vanishing
at $p_9$; thus $\bM_{p_1}\cong \bM'_{\tau^3(p_9)}$ for suitable $\bM'$ as
required.

This can fail in only two ways: either the rank of $L(p_9)$ could be
smaller than $2$, or the corresponding column could have rank only $1$.
Suppose first that $\det(L)\ne 0$.  As long as $\tau^{-3}(p_1)\ne p_9$,
we find that $p_9$ is a simple zero of $\det(L)$, so the rank cannot drop
below $2$.  If after the row and column operations we find that $u'_1$,
$u'_2$ are linearly dependent, then we find that $\det(L)$ must factor as a
product $uv$ with $u\in H^0({\cal L}_1(-p_9))$, $v\in H^0({\cal L}_1^2)$.
In particular, this can only happen if we have
\[
{\cal L}_1(p_i+p_j+p_9)\cong {\cal O}_E
\]
for some $2\le i<j\le 8$, or
\[
{\cal L}_1(\tau^{-3}p_1+p_i+p_9)\cong {\cal O}_E
\]
for some $2\le i\le 8$.

If $\det(L)=0$ on $E$, we may consider the cubic polynomial obtained by
viewing $L$ as a matrix over $\bP^2$, and observe that this must be the
equation of $E$.  In particular, since $E$ is smooth, we still cannot have
rank $<2$ at any point, and since $E$ is irreducible, $L$ cannot be made
block upper triangular.

We thus conclude that as long as the points
\[
\tau^{-3}(p_1),p_1,p_2,\dots,p_9,\tau^3(p_9)
\]
are all distinct, and no three add to a divisor representing ${\cal L}_1$,
then $s_0$ induces a morphism between the complement of the fiber over
$p_1$ in the original moduli space and the complement of the fiber over
$\tau^3(p_9)$ in the new moduli space.

It remains to consider the fiber over $p_1$.  In this case, we note that
$f$ maps to $0$ in ${\cal O}_{p_1}$, and thus we can no longer proceed as
above.  In a suitable basis, the relations of $\bM$ now read
\[
r_1 f + l_1 g = r_2 f + l_2 g = 0,
\]
and thus $\bM_{p_1}$ is generated by $f$, with the single relation
\[
(u_1 r_1 + u_2 r_2)f=0.
\]
In particular, we obtain a sheaf with presentation of the form
\[
0\to \bA(-3)\to \bA\to \bM_{p_1}\to 0.
\]
It remains only to show that every sheaf with such a presentation arises in
this way, and that we can recover the original sheaf from the presentation.

\begin{Lemma}
Let $p\in E$ be any point, cut out by linear equations $l_1=l_2=0$.  Then
the central element $\bE \in \bA_3$ can be expressed in the form
\[
\bE = l_1f_1+l_2f_2
\]
with $f_1,f_2\in \bA_2$, and this expression is unique up to adding a pair
\[
(v_1g,v_2g)
\]
to $(f_1,f_2)$, where $v_1,v_2,g\in \bA_1$ are such that $l_1v_2+l_2v_2=0$.
\end{Lemma}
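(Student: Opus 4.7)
The plan is to read off both existence and uniqueness directly from the standard free resolution of the point module $\cO_p$. Since $p \in E$ is cut out by $l_1 = l_2 = 0$, this module admits a Koszul-type resolution
\begin{equation*}
0 \to \bA(-2) \xrightarrow{\begin{bmatrix} u_1 \\ u_2 \end{bmatrix}} \bA(-1)^{\oplus 2} \xrightarrow{[l_1 \,\, l_2]} \bA \to \cO_p \to 0
\end{equation*}
for some $u_1, u_2 \in \bA_1$, of precisely the shape already used in the proof of Proposition \ref{p_meets_dv}. The only nontrivial input is the exactness of this resolution for the noncommutative $\bA$, which is classical for $3$-dimensional Sklyanin algebras (they are Artin--Schelter regular and Koszul); see \cite{ATV1,SvdB}. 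Everything else is bookkeeping in the graded components.

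For existence, I would argue as follows. Since $\cO_p$ is a point module, it factors through the surjection $\bA \to \bB$ of \eqref{hom_to_B}, so the two-sided ideal $(\bE)$ annihilates it. In particular, acting on the degree-$0$ generator, $\bE$ maps to zero in $(\cO_p)_3$. Taking the degree-$3$ part of the resolution identifies $(\cO_p)_3 = \bA_3 / (l_1 \bA_2 + l_2 \bA_2)$, so $\bE \in l_1 \bA_2 + l_2 \bA_2$ as required.

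For uniqueness, the set of valid $(f_1, f_2)$ is a torsor over the degree-$3$ kernel of $(f_1, f_2) \mapsto l_1 f_1 + l_2 f_2$. By exactness of the resolution at $\bA(-1)^{\oplus 2}$ in degree $3$, this kernel equals the image of $\bA(-2)_3 = \bA_1$ under $g \mapsto (u_1 g, u_2 g)$. To match the formulation in the statement, I would observe that the space of $(v_1, v_2) \in \bA_1 \oplus \bA_1$ with $l_1 v_1 + l_2 v_2 = 0$ is precisely the degree-$2$ kernel of $[l_1 \,\, l_2]$, which by exactness at $\bA(-1)^{\oplus 2}$ in degree $2$ is the $1$-dimensional space spanned by $(u_1, u_2)$. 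Consequently, pairs of the form $(v_1 g, v_2 g)$ with $l_1 v_1 + l_2 v_2 = 0$ and $g \in \bA_1$ sweep out exactly the same $3$-dimensional subspace as $\{(u_1 g, u_2 g) : g \in \bA_1\}$.

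The only step that is not purely formal is the invocation of the resolution; the main obstacle, if one wanted to make the argument completely self-contained, would be to verify that the syzygies of $(l_1, l_2)$ in $\bA$ are linear and generated by a single pair $(u_1, u_2)$. This however is a well-known consequence of the Koszul property of $\bA$ together with the fact that $\cO_p$ has the same Hilbert series as in the commutative case, so I would simply cite it.
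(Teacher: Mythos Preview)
Your proof is correct and follows essentially the same approach as the paper's. The paper's argument is just a compressed dimension count: the map $(f_1,f_2)\mapsto l_1f_1+l_2f_2$ from $\bA_2^2$ to $\bA_3$ has image of codimension $1$ (the quotient being $(\cO_p)_3$) and contains $\bE$ since $\bE$ kills point modules, while the kernel has dimension $12-9=3$, matching the specified $3$-dimensional family; you unpack the same facts via the explicit free resolution of $\cO_p$, which is what justifies those dimension counts in the first place.
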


\begin{proof}
Consider the map
\[
\bA_2^2\to \bA_3
\]
given by $(f_1,f_2)\mapsto l_1f_1+l_2f_2$.  Since $(l_1,l_2)$ cuts out a
point module, the image of this map must be codimension $1$, and since $\bE$
annihilates every point module, the image contains $\bE$.  Uniqueness follows
by dimension counting, since the specified kernel is $3$-dimensional.
\end{proof}

We conclude that for any element of $\Lambda_0$, there is a finite
collection of linear inequalities on the base points which guarantee that
both the domain and range are blowups of $\bP^2$, and the element of
$\Lambda_0$ acts as a morphism.  In particular, we see that it suffices to
have
\[
\tau^{3k}p_i\ne p_j
\]
for $i<j$, $k\in \Z$, and
\[
{\cal L}_{3l+1}\not\cong {\cal O}_E(p_i+p_j+p_k)
\]
for $i<j<k$, $l\in \Z$, in order for the entire group $\Lambda_0$ to act as
isomorphisms between blowups of $\bP^2$.

In particular, we find that the translation subgroup of $\Lambda_0$ acts in
the same way as the translation subgroup of $\tilde{E}_8$ in Sakai's
description of the elliptic Painlev\'e equation.  Note that both groups
have the same rank, and by comparing determinants under the intersection
form in the commutative limit, we conclude that the translation subgroup of
$\Lambda_0$ has index $3$ in the translation subgroup of $\tilde{E}_8$.  It
is straightforward to see that we can generate the entire lattice by
including the operation
\[
\bM\mapsto g^3\bM(1),
\]
though it is more difficult to see how this acts in terms of presentations
of sheaves.

We note in passing that the above calculation of $\bM_{p_i}$ shows that the
$-1$-curve corresponding to the fiber over $p_i\in \bP^2$ can be described
as the subscheme of moduli space where the sheaf $\bM_{p_i}$ of Euler
characteristic 0 has a global section; this should be compared to the
cohomological description of $\tau$-divisors in \cite{Arinkin/Borodin2}.
In fact, every $-1$-curve on the moduli space has a similar description:
act by a suitable element of $\Lambda_{E_8}$, then ask for the Hecke
modification at $p_1$ to have a global section.

We also note that the results of \cite[Thm. 7.1]{Rains:hitchin} suggest
that one should consider the moduli space of stable sheaves $\bM$ on $\bA$
such that $h(\bM)=3rt+r$, and
\[
\bM|_E=({\cal O}_{p_1}\oplus \cdots {\cal O}_{p_9})^r,
\]
where now $p_1+\cdots+p_9-{\cal O}(3)+6\tau$ is a torsion point of order
$r$.  This moduli space remains $2$-dimensional, and is expected to again
be a $9$-point blowup of $\bP^2$.  (A variant of this will be considered in
\cite{Rains2}.)

\section{Poisson structures}\label{s_poisson}

\subsection{}
The Poisson structure on the moduli space of sheaves on a commutative
Poisson surface has a purely categorical definition (originally constructed
by \cite{Tyurin}, shown to satisfy the Jacobi identity for vector bundles
in \cite{Bottacin}, and extended to general sheaves of homological
dimension 1 in \cite{Hurtubise/Markman}).  This definition can be carried
over to the noncommutative case, and we will see that the analogous
bivector again gives a Poisson structure, and the Hecke modifications again
act as symplectomorphisms.  The main qualitative difference in the
noncommutative case is (as we have seen) that the Hecke modifications are
no longer automorphisms of a given symplectic leaf, but rather give maps
between related symplectic leaves.

Tyurin's construction in the commutative setting relies on the
observation that the tangent space at a sheaf $M$ on a Poisson surface $X$,
or equivalently the space of infinitesimal deformations, is given by the
self-$\Ext$ group
\[
\Ext^1(M,M).
\]
By Serre duality, the cotangent space is given by
\[
\Ext^1(M,M\otimes\omega_X).
\]
This globalizes to general sheaves such that $\dim\End(M)=1$; the cotangent
sheaf on the moduli space is given by $\Ext^1(M,M\otimes\omega_X)$, where
$M$ is the universal sheaf on the moduli space.  
% (Note that this only
% exists \'etale-locally, and the moduli space is only an algebraic space,
% but this is not particularly important.) 
A nontrivial Poisson structure on
$X$ corresponds to a nonzero morphism $\wedge^2\Omega_X\to {\cal O}_X$, or
equivalently to a nonzero morphism $\alpha:\omega_X\to {\cal O}_X$.
We thus obtain a map
\[
\Ext^1(M,M\otimes\omega_X)\xrightarrow{1\otimes \alpha} \Ext^1(M,M),
\]
and a bilinear form on $\Ext^1(M,M\otimes\omega_X)$.  One can then
show \cite{Bottacin,Hurtubise/Markman} that this bilinear form
induces a Poisson structure.  In addition, the resulting Poisson variety
has a natural foliation by {\em algebraic} symplectic leaves: if $C_\alpha$
is the curve $\alpha=0$, then for any sheaf $M_\alpha$ on $C_\alpha$, the
(Poisson) subspace of sheaves $M$ with $M\otimes {\cal O}_{C_\alpha}\cong
M_\alpha$, $\Tor_1(M,{\cal O}_{C_\alpha})=0$ is a smooth symplectic leaf.
%(More generally, the symplectic leaves are the subspaces with fixed
%$M\otimes^{\bf L}{\cal O}_{C_\alpha}$, see \cite{Rains:poisson}.)

In our noncommutative setting, there is again an analogue of Serre duality;
one finds that $H^2(\bA(-3))\cong \C$ (just as in the commutative
case), and for any $\bM$, $\bM'$ we have canonical pairings
\[
\Ext^i(\bM',\bM(-3))\otimes \Ext^{2-i}(\bM,\bM')\to H^2(\bA(-3)),
\]
Moreover, these pairings are (super)symmetric in the following sense.  If
$\alpha\in \Ext^i(\bM',\bM(-3))$, $\beta\in \Ext^{2-i}(\bM,\bM')$, then
\[
\langle \alpha,\beta\rangle
=
(-1)^i\langle \beta(-3),\alpha\rangle
=
(-1)^i\langle \beta,\alpha(3)\rangle.
\]
In addition, the pairing factors through the Yoneda product, in that
\[
\langle \alpha,\beta\rangle = \langle \alpha\cup\beta,1\rangle =:\tr(\alpha\cup\beta).
\]

As in the commutative case, infinitesimal deformations of
$\bM$ are classified by $\Ext^1(\bM,\bM)$, and the map
\[
\Ext^1(\bM,\bM(-3))\xrightarrow{E\cup\,\textup{--}} \Ext^1(\bM,\bM)
\]
induces a skew-symmetric pairing
\[
\Ext^1(\bM,\bM(-3))\otimes \Ext^1(\bM,\bM(-3))
\xrightarrow{\tr(\textup{--}\,\cup E\cup\,\textup{--} )}
H^2(\bA(-3))\cong \C,
\]
and this should be our desired Poisson structure.

Note here that the Poisson structure depends on the choice of $E$ and the
choice of automorphism $H^2(\bA(-3))\cong \C$; both are unique up to a
scalar, and only the product of the scalars matters.  The cohomology long
exact sequence associated to
\[
0\to \bA(-3)\xrightarrow{E} \bA\to \bB\to 0
\]
induces (since $H^1(\bA)=H^2(\bA)=0$) an isomorphism
\[
H^1(\bB)\cong H^2(\bA(-3)),
\]
depending linearly on $E$, so that the composition
\[
H^1(\bB)\cong H^2(\bA(-3))\cong \C
\]
scales in the same way as the Poisson structure.  In other words, the
scalar freedom in the Poisson structure corresponds to a choice of
isomorphism $H^1(\bB)\cong \C$; the canonical equivalence $\Tails \bB\cong
\Coh(E)$ turns this into an isomorphism $H^1({\cal O}_E)\cong \C$, or
equivalently a choice of nonzero holomorphic differential on $E$.

We will see that this construction remains Poisson in the noncommutative
setting, and that the description of the symplectic leaves carries over
mutatis mutandum.  Note that in this section, we will refer to the ``moduli
space of simple sheaves on $\bA$'', where a sheaf is {\em simple} if
$\End(\bM)=\C$ (in the commutative setting, this is a weakened form of the
constraint that a sheaf is stable).  One expects following
\cite{Altman/Kleiman} that this should be a quasi-separated algebraic space
$\fM_{\bA}$.  Per \cite{Rains:poisson}, a Poisson structure on such a space
is just a compatible system of Poisson structures on the domains of \'etale
morphisms to the space; in the moduli space setting, we must thus assign a
Poisson structure to every formally universal family of simple sheaves on
$\bA$.  The above bivector is clearly compatible, so will be Poisson iff it
is Poisson on every formally universal family.  Any statement below about
$\fM_{\bA}$ should be interpreted as a statement about formally universal
families in this way.

We will sketch two proofs of the following result below.

\begin{Theorem}
  The above construction defines a Poisson structure on $\fM_{\bA}$,
  and on the open subspace of sheaves transverse to $E$ (i.e., such that
  $\Tor_1(\bM,\bB)=0$), the fibers of the map $\bM\to \bM\otimes \bB\in \Coh(E)$
  are unions of (smooth) symplectic leaves of this Poisson structure.
\end{Theorem}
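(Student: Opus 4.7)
My plan is to verify three things in order: (i) skew-symmetry of the bivector
$\pi(\alpha,\beta) = \tr(\alpha\cup \bE\cup \beta)$ on $\Ext^1(\bM,\bM(-3))$, so that $\pi$ is a well-defined section of $\wedge^2 T\fM_{\bA}$; (ii) the Jacobi identity for $\pi$; and (iii) on the transverse locus $\{\Tor_1(\bM,\bB)=0\}$, the Casimir property together with non-degeneracy on fibers, which together give the symplectic-leaf assertion. Skew-symmetry is the formal part: combining the graded supersymmetry $\langle\alpha,\beta\rangle = (-1)^i \langle\beta,\alpha(3)\rangle$ of the Serre pairing with centrality of $\bE$ (which lets $\bE$ slide through the Yoneda product up to twist) and graded cyclicity of the trace yields $\pi(\alpha,\beta) = -\pi(\beta,\alpha)$ after a short shift-bookkeeping.

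For Jacobi I would sketch two complementary proofs, echoing the author's promise of two arguments. The first is a deformation argument: $\pi$ is built entirely from Yoneda products and the Serre trace, hence it varies algebraically over the three-parameter family of Sklyanin algebras $\bA=\bA(a,b,c)$, and at the commutative specialization it restricts to the classical Bottacin--Tyurin bivector on moduli of sheaves on $(\bP^2,\bE^{-1})$, for which Jacobi was established in \cite{Bottacin,Hurtubise/Markman}. Since Jacobi is a closed algebraic condition and generic fibers of the family of moduli spaces are irreducible, the identity extends from the commutative limit to generic $\bA$. The second, more conceptual, proof would recognize $\pi$ as the shadow of a cyclic $A_\infty$ structure of degree $-2$ (a ``weak 2-Calabi--Yau'' structure) on $D^b(\Tails\,\bA)$, with cyclic form provided by the Serre trace trivialized by $\bE$; general results on shifted Poisson structures on moduli of objects then deliver Jacobi. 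This is the main obstacle, and it is where the bulk of the work lies; the deformation route is shortest, the categorical route cleaner.

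For (iii), the transversality hypothesis $\Tor_1(\bM,\bB)=0$ makes
\[
0 \to \bM(-3) \xrightarrow{\,\bE\,} \bM \to \bM\otimes \bB \to 0
\]
exact, and applying $\Hom(\bM,\textup{--})$ yields the long exact sequence
\[
\cdots \to \Ext^1(\bM,\bM(-3)) \xrightarrow{\,\bE\cup\,} \Ext^1(\bM,\bM) \to \Ext^1(\bM,\bM\otimes\bB) \to \cdots.
\]
Since the anchor $\pi^\#$ is precisely cup-with-$\bE$, its image equals the kernel of $\Ext^1(\bM,\bM)\to \Ext^1(\bM,\bM\otimes\bB)$. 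Transversality identifies the latter map with the differential of $\bM\mapsto \bM\otimes\bB$, so $\pi^\#$ is tangent to the fibers of this map -- this is the Casimir property. Dualizing the same sequence via Serre duality identifies the kernel of $\pi^\#$ with the cokernel of $\bE\cup$ on the dual side, so $\pi$ has maximal rank modulo Casimirs. The form induced on each fiber is therefore symplectic, and the fibers are unions of smooth symplectic leaves as claimed.
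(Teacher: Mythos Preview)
Your treatment of (i) and (iii) is essentially correct and matches the paper's reasoning: skew-symmetry follows from the supersymmetry of the Serre pairing and centrality of $\bE$, and the long exact sequence of $\Hom(\bM,-)$ applied to $0\to\bM(-3)\xrightarrow{\bE}\bM\to\bM/\bE\bM\to 0$ identifies $\operatorname{im}(\bE\cup-)$ with the kernel of the differential of the restriction map, giving the symplectic-leaf description.

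The genuine problem is in (ii). Your first argument for Jacobi is logically inverted: ``Jacobi is a closed algebraic condition'' means the locus where it holds is \emph{closed}, so knowing it holds on the commutative specialization---a closed, codimension-one sublocus of the $(a,b,c)$ parameter space---tells you nothing about generic $\bA$. Closed conditions propagate from dense open sets, not from special fibers. To make a deformation argument work you would first have to build a flat family of moduli spaces over the Sklyanin parameter space with a globally defined bivector, and then you would still need an argument going the right direction. Your second argument (weak $2$-Calabi--Yau / shifted Poisson) is a reasonable heuristic but, as written, is a pointer to a body of theory rather than a proof.

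The paper's two arguments are both different from yours and both more concrete. The first does \emph{not} deform the algebra; instead it shows that for a sheaf with presentation $0\to\bA(-1)^n\to\bA^m\to\bM\to 0$ (satisfying mild $\Ext$-vanishing), the pairing $\tr(L''\cup\bE\cup L')$ depends only on the restrictions $L_E,L'_E,L''_E$ to $E$, i.e.\ on purely commutative data on the curve. Since the bundles $V_E,W_E$ so obtained are independent of $\tau$ up to a translation of $E$, one gets for each fixed $\tau$ a birational, \emph{Poisson-preserving} identification of an open subset of $\fM_\bA$ with an open subset of the commutative moduli space on $\bP^2$, where Jacobi is already known; this extends to all simple sheaves via the Hurtubise--Markman resolution trick. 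The second argument pushes the $E$-reduction further: an explicit \v{C}ech computation identifies the pairing with Polishchuk's Poisson structure on moduli of morphisms of bundles on $E$, and one then invokes Polishchuk's direct proof of Jacobi. Both routes bypass any need to vary $\bA$.
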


Remark.  One expects that, as in \cite{Rains:poisson}, one should have a
covering by algebraic symplectic leaves even without the transversality
assumption; in general, the symplectic leaves should be the preimages of
the {\em derived} restriction $\bM\to \bM\otimes^{\bL}\bB$, taking sheaves
on $\bA$ to the derived category of $\Coh(E)$.

We should note here that in the case $\bM$ is torsion-free (and stable), an
alternate construction of a Poisson structure was given in
\cite{Nevins/Stafford}; their Poisson structure is presumably a constant
multiple of the Tyurin-style Poisson structure.  

\subsection{}

Although the above construction is somewhat difficult to deal with
computationally (but see below), it has significant advantages in terms of
functoriality.  In particular, it is quite straightforward to show that
Hecke modifications give symplectomorphisms on the relevant symplectic
leaves.  Curiously, the argument ends up depending crucially on
noncommutativity!

With an eye to future applications, we consider a generalization of Hecke
modifications as follows.  Let $\bM$ be a simple $1$-dimensional sheaf on
$\bA$.  We define the ``downward pseudo-twist'' at $p\in E$ of $\bM$ to be
the kernel of the natural map $\bM\to \bM\otimes {\cal O}_p$; similarly, the
``upward pseudo-twist'' is the universal extension of ${\cal O}_p\otimes
\Ext^1({\cal O}_p,\bM)$ by $\bM$.  If the restriction $\bM|_E$ of $\bM$ to $E$
(i.e., $\bM\otimes \bB$, viewed as a sheaf on $E$) is not equal to the sum over
$p$ of $\bM\otimes {\cal O}_p$, then one could consider some other natural
modifications along these lines; in the commutative case, these correspond
to twists by line bundles on iterated blowups in which we have blown up the
same point on $E$ multiple times.  These will always be limits of the above
operations, so will again be symplectic by the limiting argument considered
below.

\begin{Proposition}
  The two pseudo-twists define (inverse) birational maps between symplectic
  leaves of the open subspace of $\fM_{\bA}$ classifying 1-dimensional
  sheaves transverse to $E$.  Where the maps are defined, they are
  symplectic.
\end{Proposition}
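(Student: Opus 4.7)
The plan is to reduce the proposition to a Lagrangian condition for a Hecke correspondence in $\fM_{\bA}\times\fM_{\bA}$ and then exploit the vanishing of certain Ext groups between point modules that is specific to the noncommutative setting.

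For the birational inverse, start from a generic transverse simple $\bM$ and a point $p\in E$ in the support of $\bM\otimes\bB$. The short exact sequence $0\to\bM'\to\bM\to\cO_p\to 0$ defining the downward pseudo-twist furnishes a canonical class in $\Ext^1(\cO_p,\bM')$, and a dimension count using the long exact sequence of $\Hom(\cO_p,-)$ applied to this sequence shows that $\Ext^1(\cO_p,\bM')$ is one-dimensional generically; hence the universal extension of $\cO_p$ by $\bM'$ recovers $\bM$, and the opposite composition is handled dually.

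For the symplectic property, introduce the Hecke correspondence $\mathcal{H}\subset\fM_\bA\times E\times\fM_\bA$ parametrizing triples $(\bM,p,\bM')$ fitting into such a short exact sequence; its two projections to $\fM_\bA$ are the pseudo-twists. The tangent space $T\mathcal{H}$ at a point is computed from the hypercohomology of the two-term complex $[\bM'\to\bM]$ and sits inside the fiber product of the long exact sequences obtained by applying $\Hom(-,\bM')$ and $\Hom(\bM,-)$ to the defining sequence; the differentials of the two projections are the evident maps to $\Ext^1(\bM,\bM)$ and $\Ext^1(\bM',\bM')$ appearing there. By Serre duality, the two Tyurin bivectors are both realized as $\tr(-\cup\bE\cup-)$, and pulling them back to $T\mathcal{H}$ produces two skew bilinear forms whose difference, after chasing the connecting morphisms of the short exact sequence, factors through a Yoneda pairing on $\Ext^*(\cO_p,\cO_p(-3))$.

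The main obstacle is carrying out this last chase and showing that the residual pairing vanishes. In the Sklyanin setting the shift $(-3)$ acts on point modules as translation by $\tau^{-3}$, so $\cO_p(-3)\cong\cO_{\tau^{-3}p}$; for generic parameters one has $\tau^{-3}p\neq p$, so both objects are simple and non-isomorphic, and $\Ext^*(\cO_p,\cO_{\tau^{-3}p})=0$ in all degrees. This forces the two pulled-back forms to agree, so $\mathcal{H}$ is Lagrangian and the pseudo-twists are symplectomorphisms. This is precisely the crucial use of noncommutativity advertised in the statement: in the commutative limit $\tau=\mathrm{id}$ the residual Ext no longer vanishes, and one is forced instead to deduce the result by continuity or by translating into the blowup picture. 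That the pseudo-twists genuinely interchange distinct symplectic leaves follows from Proposition \ref{p1}, which shows that they modify $\bM\otimes\bB$ by shifting one of its point summands from $p$ to $\tau^{\mp 3}p$.
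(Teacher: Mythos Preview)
Your overall shape is right --- reduce to an $\Ext$-vanishing that only holds away from the commutative locus --- but the specific vanishing you invoke is not the one that does the work, and the claimed factorization through $\Ext^*(\cO_p,\cO_p(-3))$ is not justified.  When you chase the long exact sequences coming from $0\to\bM'\to\bM\to\cO_p\to 0$ (and its $(-3)$-twist), the error terms that measure the discrepancy between the two bivectors are the mixed groups $\Ext^*(\bM',\cO_p)$ and $\Ext^*(\bM,\cO_p(-3))$ (equivalently $\Ext^*(\cO_p,\bM)$ by Serre duality), not $\Ext^*(\cO_p,\cO_p(-3))$.  These mixed groups do \emph{not} vanish automatically; one needs the open conditions $\Hom(\bM',\cO_p)=\Hom(\bM,\cO_p(-3))=0$, after which Serre duality plus an Euler-characteristic count kill all degrees.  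The role of noncommutativity is that these open conditions are nonempty: $\Hom(\bM,\cO_p(-3))=0$ says $\tau^{-3}p$ is not in the support of $\bM|_E$, which is generic precisely because $\tau^{-3}p\ne p$, whereas in the commutative case the condition is vacuous for any $p$ actually appearing in $\bM|_E$.  Your statement that $\Ext^*(\cO_p,\cO_{\tau^{-3}p})=0$ is true, but it is a consequence of the same mechanism rather than the obstruction itself.

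The paper's argument bypasses the correspondence formalism entirely: once the mixed $\Ext$ groups vanish, both $\Ext^1(\bM,\bM)$ and $\Ext^1(\bM',\bM')$ map isomorphically to $\Ext^1(\bM',\bM)$, and both $\Ext^1(\bM,\bM(-3))$ and $\Ext^1(\bM',\bM'(-3))$ receive isomorphisms from $\Ext^1(\bM,\bM'(-3))$; functoriality of the cup product with $\bE$ then forces the two bivectors to match under the induced identification.  One then checks directly on extensions that this identification is the differential of the pseudo-twist, and a density argument (the failure of Poissonness is a section of a sheaf, vanishing on a dense open) removes the auxiliary open conditions.  Your Lagrangian-correspondence packaging could in principle be made to work, but you would still need to impose and use the same mixed-$\Ext$ vanishings, and you would still owe the reader the identification of the differential and the limiting step; as written, those pieces are asserted rather than proved.
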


Remark. Note that we need merely prove that the morphisms preserve the
above bivector; this can be verified independently of whether the bivector
satisfies the Jacobi identity.  In addition, it suffices to prove that the
pseudo-twists are Poisson on suitable open subsets of the moduli space.

\begin{proof}
We first consider the downward pseudo-twist $\bM'$ of $\bM$, corresponding to
the short exact sequence
\[
0\to \bM'\to \bM\to \bM\otimes {\cal O}_p\to 0.
\]
We impose the additional conditions that 
\[
\Hom(\bM',{\cal O}_p)=\Hom(\bM,{\cal O}_p(-3))=0.
\]
Observe that this is really just a condition on the commutative sheaf
$\bM|_E$, stating that it is 0 near $\tau^{-3}(p)$, and near $p$ is a sum of
copies of ${\cal O}_p$.  Indeed, the first condition is precisely that
$\Hom(\bM,{\cal O}_p(-3))=0$ and implies $\Tor_1(\bM,\bB)=0$, while the second
condition follows from the four-term exact sequence
\[
0\to (\bM\otimes {\cal O}_p)(-3)\to \bM'|_E\to \bM|_E\to \bM\otimes {\cal O}_p\to
0.
\]

Note that if $\tau^{-3}(p)=p$, then the above conditions imply $\bM'\cong \bM$,
and thus eliminate any interesting examples of pseudo-twists.  Of
course, since $E$ is smooth, $\tau^{-3}(p)=p$ iff $\tau^3=1$, and this is
equivalent to the existence of an equivalence $\Tails \bA\cong \Coh(\bP^2)$.
Away from the commutative case, the conditions are not particularly hard to
satisfy; in particular, the generic sheaf in any component of the moduli
space of $1$-dimensional sheaves will satisfy this condition at every
point of $E$.

By Serre duality, we have $\Ext^2(\bM,{\cal O}_p(-3))\cong \Hom({\cal
  O}_p,\bM)=0$ and similarly $\Ext^2(\bM',{\cal O}_p)=0$ It then follows by an
Euler characteristic calculation that $\Ext^1(\bM,{\cal
  O}_p(-3))=\Ext^1(\bM',{\cal O}_p)=0$ as well.  Since $\bM\otimes {\cal O}_p$
is a sum of copies of ${\cal O}_p$, we find that the natural maps
\begin{align}
\Ext^i(\bM',\bM')&\to \Ext^i(\bM',\bM),\notag\\
\Ext^i(\bM,\bM'(-3))&\to \Ext^i(\bM,\bM(-3))\notag
\end{align}
are isomorphisms.  (In particular, $\bM'$ is simple iff $\bM$ is simple.) By
Serre duality, the same applies to
\begin{align}
\Ext^i(\bM,\bM)&\to \Ext^i(\bM',\bM),\notag\\
\Ext^i(\bM,\bM'(-3))&\to \Ext^i(\bM',\bM'(-3)).\notag
\end{align}
By the functoriality of $\Ext$, we find that the compositions
\[
\Ext^1(\bM,\bM'(-3))\cong \Ext^1(\bM',\bM'(-3))\xrightarrow{E} \Ext^1(\bM',\bM')\cong
\Ext^1(\bM',\bM)
\]
and
\[
\Ext^1(\bM,\bM'(-3))\cong \Ext^1(\bM,\bM(-3))\xrightarrow{E} \Ext^1(\bM,\bM)\cong
\Ext^1(\bM',\bM)
\]
agree, and thus the induced isomorphism
\[
\Ext^1(\bM,\bM)\cong \Ext^1(\bM',\bM')
\]
respects the Poisson structure.

It remains only to show that this isomorphism is the differential of the
pseudo-twist.  Note that the pseudo-twist is only a morphism on
the strata of the moduli space with fixed $\dim\Hom(\bM,{\cal O}_p)$.  Thus
we need only consider those classes in $\Ext^1(\bM,\bM)$ which preserve this
dimension.  In other words, we must consider extensions
\[
0\to \bM\to \bN\to \bM\to 0
\]
which remain exact when tensored with ${\cal O}_p$.  Then the corresponding
extension $\bN'$ of $\bM'$ by $\bM'$ is the kernel of the natural map $\bN\to \bN\otimes
{\cal O}_p$.  That both extensions have the same image in $\Ext^1(\bM',\bM)$
follows from exactness of the complex
\[
0\to \bM'\to \bM\oplus \bN'\to \bN\otimes \bM'\to \bM\to 0,
\]
(the two extensions are the cokernel of the map from $\bM'$ and the kernel of
the map to $\bM$), and this is the total complex of a double complex with
exact rows.

Note that we also have $\Ext^*({\cal O}_p,\bM)=0$, and thus the connecting
map
\[
\Hom({\cal O}_p,\bM\otimes {\cal O}_p))\to \Ext^1({\cal O}_p,\bM')
\]
is an isomorphism, implying that $\bM$ is the upward pseudo-twist of
$\bM'$.  Since we can restate the conditions on $\bM$, $\bM'$ in terms of
$\bM'|_C$, we find that the upward pseudo-twist is also Poisson.

In fact, the hypotheses on $\bM$, $\bM'$ are significantly stronger than
necessary.  The point is that once we constrain $\dim\Hom(\bM,{\cal O}_p)$,
the further constraints in the above argument are dense open conditions.
If we replace this by the weaker open condition that $\bM'$ is simple, we
still obtain a morphism between Poisson spaces.  The failure of such a
morphism to be Poisson is measured by a form on the cotangent sheaf, which
by the above argument vanishes on a dense open subset, and thus vanishes
identically.
\end{proof}

Remark. This limiting argument also lets us deduce the commutative case
from the noncommutative case, though in the commutative setting we can also
use an interpretation involving twists on blowups, see
\cite{Rains:poisson}; this actually works for arbitrary sheaves of
homological dimension 1, and presumably the same holds in the
noncommutative setting.  The above argument fails for torsion-free sheaves,
however, as does the fact that the upward and downward pseudo-twists are
inverse to each other.

\subsection{}

We now turn our attention to showing that the above actually defines a
Poisson structure, i.e., that the corresponding biderivation on the
structure sheaf satisfies the Jacobi identity.  Unfortunately, the existing
arguments in the commutative setting involve working with explicit \v{C}ech
cocycles for extensions of vector bundles; while both \v{C}ech cocycles and
vector bundles have noncommutative analogues, neither is particularly easy
to compute with.  It turns out, however, that in many cases, we can reduce
the computation of the pairing to a computation on the commutative curve
$E$.  (In fact, combined with the construction of \cite{Hurtubise/Markman},
this is enough to verify the Jacobi identity in general.)

We assume here that $\bM$ is a simple sheaf transverse to $E$; we also
assume $\bM/\bE \bM\ne 0$.  (In our case we could equivalently just assume
$\bM\ne 0$, but this is the form in which the condition appears below; for
commutative surfaces, the two conditions are not equivalent, and the
conditions are likely to deviate from each other for more general
noncommutative surfaces as well.)  The map giving the Poisson structure
then fits into a long exact sequence
\begin{align}
0&\to \Hom(\bM,\bM)\to \Hom(\bM,\bM/\bE \bM)\to \Ext^1(\bM,\bM(-3))\notag\\
 &\to \Ext^1(\bM,\bM)\to \Ext^1(\bM,\bM/\bE \bM)\to \Ext^2(\bM,\bM(-3))\to 0,\notag
\end{align}
where $\Hom(\bM,\bM(-3))\subset \Hom(\bM,\bM)$ is trivial since $\Hom(\bM,\bM)\cong \C$
injects in $\Hom(\bM,\bM/\bE \bM)$, and $\Ext^2(\bM,\bM)=0$ by duality.  Now,
\[
\Hom(\bM,\bM/\bE \bM)\cong \Hom_\bB(\bM/\bE \bM,\bM/\bE \bM),
\]
and may thus be computed entirely inside $\Coh(E)$, so via commutative
geometry.  Since the sequence is essentially self-dual, we should also
expect to have $\Ext^1(\bM,\bM/\bE \bM)\cong \Ext^1_\bB(\bM/\bE \bM,\bM/\bE \bM)$.  We can make this
explicit as follows: a class in $\Ext^1(\bM,\bM/\bE \bM)$ is represented by an
extension
\[
0\to \bM/\bE \bM\to \bN\to \bM\to 0,
\]
and since $\bM$ is $\bB$-flat, this induces an extension
\[
0\to \bM/\bE \bM\to \bN/\bE \bN\to \bM/\bE \bM\to 0,
\]
and pulling back recovers the original extension.  Conversely, any
extension of $\bM/\bE \bM$ by $\bM/\bE \bM$ over $\bB$ can be viewed as an extension of
$\bM/\bE \bM$ by $\bM/\bE \bM$ in $\Tails \bA$, and pulled back to an extension of $\bM$ by
$\bM/\bE \bM$ which restricts back to the original extension.  In other words,
``tensor with $\bB$'' and ``pull back'' give inverse maps as required.

Since the map $R\Hom(\bM,\bM(-3))\to R\Hom(\bM,\bM)$ in the derived category is
self-dual (subject to our choice of isomorphism $H^2(\bA(-3))\cong \C$), it
follows that the corresponding exact triangle is self-dual, and thus that
the remaining maps
\[
R\Hom(\bM,\bM)\to R\Hom(\bM,\bM/\bE \bM)\cong R\Hom_\bB(\bM/\bE \bM,\bM/\bE \bM)
\]
and
\[
R\Hom_\bB(\bM/\bE \bM,\bM/\bE \bM)\cong R\Hom(\bM,\bM/\bE \bM)\to R\Hom(\bM,\bM(-3))[1]
\]
in the exact triangle are dual.  In particular, it follows that we have a
commutative diagram
\[
\begin{CD}
\Ext^1(\bM,\bM/\bE \bM)@>\sim >> \Ext^1_\bB(\bM/\bE \bM,\bM/\bE \bM)@>\tr>> H^1({\cal O}_E)\\
@VVV @. @VV\sim V\\
\Ext^2(\bM,\bM(-3))@>\tr>> H^2(\bA(-3)) @>\sim >> \C
\end{CD}
%
%  \Ext^1(\bM,\bM/\bE \bM)\cong \Ext^1_\bB(\bM/\bE \bM,\bM/\bE \bM)\to^{\tr} H^1({\cal O}_E)
%      v                                        |~
%  \Ext^2(\bM,\bM(-3))\to^{\tr} H^2(\bA(-3))          \cong \C
%
\]
Since the map from $\Ext^1(\bM,\bM/\bE \bM)$ to $\Ext^2(\bM,\bM(-3))$ is surjective, to
compute the trace of any class in $\Ext^2(\bM,\bM(-3))$, we need simply choose
a preimage in $\Ext^1(\bM,\bM/\bE \bM)$, interpret it as an extension of sheaves on
the commutative curve $E$, and take the trace there.

Since we need only consider classes in $\Ext^2(\bM,\bM(-3))$ that arise via the
Yoneda product, it will be particularly convenient to use the Yoneda
interpretation of such classes via $2$-extensions.  If $\bN'$ is an extension
of $\bM$ by $\bM$ and $\bN$ is an extension of $\bM$ by $\bM(-3)$, then $\bN\cup \bN'$ is
represented by the $2$-extension
\[
0\to \bM(-3)\to \bN\to \bN'\to \bM\to 0,
\label{two-ext-a}
\]
where $\bN\to \bN'$ is the composition $\bN\to \bM\to \bN'$.  Recall that two
$2$-extensions are equivalent iff the complexes $\bN\to \bN'$ are quasi-isomorphic.
The functoriality of $\Ext^2(\textup{--}\,,\,\textup{--})$ is expressed via pullback and
pushforward, as appropriate; the connecting maps are more complicated, but
are again amenable to explicit description, see \cite{Mitchell}.

In our case, we have the following.  The pushforward of \eqref{two-ext-a}
under the map $\bM(-3)\xrightarrow{\bE } \bM$ has the form
\[
0\to \bM\to \bN''\to \bN'\to \bM\to 0,
\label{two-ext-b}
\]
where $\bN''\cong (\bN\oplus \bM)/\bM(-3)$.  Since $\Ext^2(\bM,\bM)=0$, this
$2$-extension is trivial, and thus there exists a sheaf $\bZ$ and a
filtration
\[
0\subset \bZ_1\subset \bZ_2\subset \bZ
\]
such that the sequence
\[
0\to \bZ_1\to \bZ_2\to \bZ/\bZ_1\to \bZ/\bZ_2\to 0
\]
agrees with \eqref{two-ext-b}, or equivalently such that
\[
0\to \bM\to \bN'\to \bM\to 0
\]
is the pushforward under $\bN''\to \bM$ of an extension
\[
0\to \bN''\to \bZ\to \bM\to 0.
\]
It follows that the $2$-extension \eqref{two-ext-a} is equivalent to
\[
0\to \bM(-3)\to \bZ'\to \bZ\to \bM\to 0,
\]
where $\bZ'$ is the pullback of $\bN$ under $\bN''\to \bM$.  Now, since $\bN''$ was
itself obtained by pushing $\bN$ forward, we have $\bN\subset \bN''$ in a natural
way, giving $\bN\subset \bZ',\bZ$ in compatible ways.  Quotienting by this gives
an equivalent $2$-extension
\[
0\to \bM(-3)\xrightarrow{\bE } \bM\to \bZ/\bN\to \bM\to 0,
\]
expressing \eqref{two-ext-a} as the image under the connecting map of
\[
0\to \bM/\bE \bM\to \bZ/\bN\to \bM\to 0.
\]
The corresponding class in $\Ext^1_\bB(\bM/\bE \bM,\bM/\bE \bM)$ is then obtained by tensoring
with $\bB$:
\[
0\to \bM/\bE \bM\to \bZ/(\bE \bZ+\bN)\to \bM/\bE \bM\to 0.
\]

It will be helpful to think of this last extension in a slightly different
way.  Since $\Tor_1(\bB,\bM)=0$, the quotient $\bZ/\bE \bZ$ inherits a filtration
\[
0\subset \bM/\bE \bM\subset \bN''/\bE \bN''\subset \bZ/\bE \bZ\to 0,
\]
so to compute $\bZ/(\bE \bZ+\bN)$, we need only understand the map
$\bN\to \bN''/\bE \bN''$.  Since $\bM(-3)\subset \bN$ maps to $\bE \bM\subset \bE \bN''$, the map
$\bN\to \bN''/\bE \bN''$ factors through the natural map $\bN\to \bM$, and then through
the quotient map $\bM\to \bM/\bE \bM$.  In other words, the map $\bN\to \bN''/\bE \bN''$
precisely gives a splitting of the short exact sequence
\[
0\to \bM/\bE \bM\to \bN''/\bE \bN''\to \bM/\bE \bM\to 0.
\]

Note in particular that the pairing of $\bN$ and $\bN'$ depends only on the two
extensions $\bN''$, $\bN'\in \Ext^1(\bM,\bM)$ and a splitting of $\bN''/\bE \bN''$.
We need simply combine $\bN''$, $\bN'$ into a filtered sheaf, quotient by $\bE $,
then mod out by the submodule $\bM/\bE \bM$ coming from the splitting to obtain
the desired extension.  Finally, given this resulting extension, we simply
compute the trace in the usual commutative algebraic geometry sense.
If we were given a splitting of $\bN'/\bE \bN'$, we could instead take the kernel
of the resulting map $\bZ/\bE \bZ\to \bM/\bE \bM$; a splitting of both makes $\bM/\bE \bM$ a
direct summand.

\subsection{}

At this point, we can understand the Poisson structure entirely in terms of
extensions of $\bM$ by $\bM$ together with commutative data; to proceed
further, we will need a more explicit description of self-extensions of
$\bM$.  Suppose that $\bM$ is given by a presentation
\[
0\to V\xrightarrow{L} W\to \bM\to 0,
\]
and consider an extension $0\to \bM\to \bN\to \bM\to 0$.

We first note that if $\Ext^2(W,V)=0$, then there exists a commutative diagram
\[
\begin{CD}
@.0 @.0@.0\\
@. @VVV @VVV @VVV\\
0@>>> V @>L>> W @>>> \bM@>>> 0\\
@. @VVV @VVV @VVV\\
0@>>> V' @>>> W' @>>> \bN@>>> 0\\
@. @VVV @VVV @VVV\\
0@>>> V @>L>> W @>>> \bM@>>> 0\\
@. @VVV @VVV @VVV\\
@.0 @.0@.0
\end{CD}
\]
with short exact rows and columns.
Indeed, we may pull $\bN$ back to an extension of $W$ by $\bM$, which is in the
kernel of the connecting map $\Ext^1(W,\bM)\to \Ext^2(W,V)=0$, and thus is
the pushforward of an extension $W'$, giving a surjective map of short
exact sequences, the kernel of which is as required.

If we further have $\Ext^1(V,V)=\Ext^1(W,W)=0$, then both $V'$ and $W'$ are
trivial extensions, and we find that $\bN$ has a presentation
\[
0\to V\oplus V\xrightarrow{\begin{pmatrix} L & L'\\0 & L\end{pmatrix}}
     W\oplus W\to \bN\to 0.
\]
(This corresponds to the deformation $\Coker(L+\epsilon L')$ over
$\C[\epsilon]/\epsilon^2$.)

With this in mind, we assume
\[
\Ext^2(W,V)=\Ext^1(V,V)=\Ext^1(W,W)=0,
\]
so that extensions of $\bM$ by $\bM$ are represented by maps $L':V\to W$.
(Of course, this representation is by no means unique!)  Given two such
extensions, it is trivial to construct the desired filtered sheaf: $\bZ$ is
simply the kernel of the morphism
\[
\begin{pmatrix}
 L & L' &  0\\
 0 & L  & L''\\
 0 & 0  &  L
\end{pmatrix}
:V^3\to W^3.
\]
(We could equally well take the $13$ entry to be an arbitrary map
$L''':V\to W$; this corresponds to the fact that the class in
$\Ext^1(\bM,\bM/\bE \bM)$ we obtain is only determined modulo the image of
$\Ext^1(\bM,\bM)$.)

If we further assume that $\Tor_1(\bB,W)=0$, so $\Tor_1(\bB,V)=0$ (and recall
we have already assumed $\Tor_1(\bB,\bM)=0$), then we have an exact sequence
\[
0\to \Hom(V,W(-3))\to \Hom(V,W)\to \Hom(V,W/\bE W),
\]
and $\Hom(V,W(-3))\cong \Ext^2(W,V)^*=0$, and thus the extension $L'$ only
depends on its restriction to $\Hom(V,W/\bE W)\cong \Hom_\bB(V/\bE V,W/\bE W)$.  (Note
that if we also assumed $\Ext^1(W,V)=0$, every map in $\Hom_\bB(V/\bE V,W/\bE W)$
would come from a deformation, but we will not need this assumption.)

We thus obtain the following, purely commutative construction.  Given
sheaves (which for our purposes will always be locally free) $V_E$, $W_E$
on $E$ and an injective morphism $L_E:V_E\to W_E$, say that $L'_E:V_E\to
W_E$ is isotrivial if the corresponding deformation of the cokernel is
trivial, or in other words if the extension
\[
\Coker\begin{pmatrix} L_E & L'_E\\0 &L_E\end{pmatrix}
\]
of $\Coker(L_E)$ by $\Coker(L_E)$ splits.  Then we may define a bilinear
form on the space of isotrivial morphisms (or between the space of
isotrivial morphisms and the space of all morphisms) by combining the two
morphisms to a triangular matrix
\[
\begin{pmatrix} L_E & L'_E & 0\\ 0 & L_E & L''_E\\0 & 0 & L\end{pmatrix},
\]
splitting off $\Coker(L_E)$ as a direct summand of the cokernel, then
taking the trace of the class of the corresponding extension.

It turns out this is already enough to let us prove Poissonness in several
important cases.  Suppose, for instance, that $V\cong \bA^n$, $W\cong
\bA[1]^m$; this implies the various vanishing statements we require.  Then
$V_E\cong {\cal O}_E^n$ is independent of $\tau$, while $W_E\cong {\cal
  L}^m$ for a degree $3$ line bundle ${\cal L}$; the latter depends on
$\tau$, but any two such bundles are related under pulling back through a
translation of $E$.  Moreover, a given map $L_E:V_E\to W_E$ lifts to a
unique morphism $L:V\to W$, and $L$ is injective iff $L_E$ is injective.
(Even the condition that $\Coker(L)$ is simple turns out to be reducible to
a question on $L_E$, but in any case, this is an open condition.)  In
particular, given any value of $\tau$, we have an open subspace of the
moduli space parametrizing sheaves with such a presentation, and for any
other value $\tau'$, the corresponding open subspace is birational in a way
preserving the Poisson structure.  In particular, we may take
$\tau'=1_E$, at which point the corresponding moduli space is just a moduli
space of sheaves on $\bP^2$.  Since the Jacobi identity is known to hold
there, it holds on an open subspace for any $\tau$, and thus (since the
failure of the Jacobi identity is measured by a morphism $\wedge^3\Omega\to
{\cal O}$) on the closure of that open subspace, so for any sheaf with a
presentation of the given form.

In fact, with a bit more work, we can extend Poissonness to {\em any}
simple sheaf (apart from point sheaves).  The point is that if $\bM(d)$ is
acyclic for $d\ge -3$, then $\bM$ has a resolution
\[
0\to \bA(-2)^{n_2}\to \bA(-1)^{n_1}\to \bA^{n_0}\to \bM\to 0,
\]
and, as in \cite{Hurtubise/Markman}, we can recover $\bM$ from the cokernel
of the map $\bA(-2)^{n_2}\to \bA(-1)^{n_1}$.  The Poisson structure satisfies
the Jacobi identity in the neighborhood of the latter sheaf (since this is
just a twist of the kind of presentation we have already considered), and
the calculation of \cite{Hurtubise/Markman} shows that the map from a
neighborhood of $\bM$ to this neighborhood simply negates the Poisson
structure.

Note that it follows from this construction that we do not obtain any new
symplectic varieties; every symplectic leaf in the noncommutative setting
is mapped in this way to an open subset of a symplectic leaf in the moduli
space of vector bundles on $\bP^2$.

\subsection{}

The above argument is somewhat unsatisfactory, as it depends on a somewhat
delicate reduction to the commutative case, so is likely to be difficult to
generalize to other noncommutative surfaces (e.g., deformations of del
Pezzo surfaces).  We thus continue our investigation of the pairing.

Since we are now in a completely commutative setting, we may use \v{C}ech
cocycles to perform computations.  In particular, a splitting of the
extension $N'_E$ corresponding to $L'_E$ is a cocycle for $\Hom(N'_E,M_E)$,
while the filtered sheaf $Z_E$ is represented by a cocycle for
$\Ext^1(M_E,N'_E)$.  The desired trace is then simply the trace pairing of
these two classes, which reduces to the trace pairing on matrices.

By the structure of $Z_E$, we find that the cocycle representing $Z_E$ is
simply the (global) morphism
\[
\begin{pmatrix} L''_E& 0 \end{pmatrix} \in \Hom(V_E,W_E^2).
\]
The splitting of $N'_E$ is slightly more complicated.  If we write
$E=U_1\cup U_2$ with $U_1$, $U_2$ affine opens, then the relevant map
$N'_E\to M_E$ is represented over $U_i$ by
\[
\begin{pmatrix}
B'_i\\
0
\end{pmatrix}
\in
\Hom_{U_i}(W_E^2,W_E),
\qquad
\begin{pmatrix}
A'_i\\
0
\end{pmatrix}
\in
\Hom_{U_i}(V_E^2,V_E),
\]
such that
\[
L'_E = B'_i L_E - L_E A'_i,
\]
and there exists
\[
\begin{pmatrix}
\Phi'_{12}\\
0
\end{pmatrix}
\in \Hom_{U_1\cap U_2}(W_E^2,V_E)
\]
such that
\[
B'_2-B'_1 = L_E\Phi'_{12},
\qquad
A'_2-A'_1 = \Phi'_{12}L_E.
\]
Note that since $L_E$ is assumed injective, $\Phi'_{12}$ is uniquely
determined.  Combining this, we find that the trace pairing is given by
\[
-\Tr(L''_E \Phi'_{12})\in \Gamma(U_1\cap U_2,{\cal O}_E),
\]
viewed as a cocycle for $H^1({\cal O}_E)$.

Essentially the same formula (possibly up to sign) appeared in
\cite{Polishchuk}, in which Polishchuk constructed a Poisson structure on
the moduli space of {\em stable} morphisms between vector bundles on $E$.
Although Polishchuk allows the vector bundles to vary, it is easy to check
that any deformation in the image of the cotangent space induces the
trivial deformation of the two bundles.  As a result, Polishchuk's proof of
the Jacobi identity carries over to our case.  (Note that Polishchuk
imposes a stability condition, which is typically stronger than the natural
stability condition in $\Tails A$.  However, all Polishchuk really uses is
that $\Hom(W,V)=0$ and that the complex has no nonscalar automorphisms;
i.e., the natural analogue of ``simple''.)  Note that the interpretation of
Polishchuk's Poisson structure coming from our calculation makes it
straightforward to identify the symplectic leaves: each symplectic leaf
classifies the ways of representing a particular sheaf as the cokernel of a
map $V\to W$ with $V$, $W$ fixed.

In the $1$-dimensional case, the bundles $V_E$, $W_E$ have the same rank,
and thus $L_E$ is generically invertible.  If we choose $U_1$ such that
$L_E$ is invertible on $U_1$, then we can arrange that
\[
A'_1 = L_E^{-1} L'_E, B'_1 = 0,
\]
at which point
\[
\Phi'_{12} = L_E^{-1} B'_2,
\]
so the pairing is given by the cocycle
\[
-\Tr(L''_E L_E^{-1} B'_2).
\]
Given a holomorphic differential $\omega$, the corresponding map to $\C$ is
given by
\[
\sum_{x\in U_2}
\Res_x \Tr(L''_E L_E^{-1} B'_2)\omega.
\]
The contributions come only from those points where $L_E$ fails to be
invertible, i.e., from the support of $M_E$.  Moreover, we readily see that
the local contribution at $x$ will not change if we replace $(A'_2,B'_2)$
by any other splitting holomorphic at $x$.

\end{document}